\newtheorem{lemma}{Lemma}[section]
\newtheorem{proposition}[lemma]{Proposition}
\newtheorem{corollary}[lemma]{Corollary}
\newtheorem{theorem}[lemma]{Theorem}
\newtheorem{remark}[lemma]{Remark}
\newtheorem{example}[lemma]{Example}
\newcommand{\real}{\mathbbm{R}}
\newcommand{\nat}{\mathbbm{N}}
\newcommand{\sumn}{\sum_{m=0}^{\infty}}
\newcommand{\sunn}{\sum_{n=0}^{\infty}}
\renewcommand{\a}{\alpha}
\renewcommand{\b}{\beta}
\newcommand{\g}{\gamma}
\renewcommand{\l}{\lambda}
\newcommand{\vp}{\varphi}
\newcommand{\ve}{\varepsilon}
\newcommand{\reald}{{\real^d}}
\newcommand{\on}{\quad\text{ on }}
\newcommand{\und}{\quad\mbox{ and }\quad}
\newcommand{\W}{\mathcal W}  
\newcommand{\E}{{\mathcal E}}
\newcommand{\T}{\mathcal T}
\newcommand{\Nc}{N}
\newcommand{\etac}{\eta}
\newcommand{\cM} {{\mathcal M}}
\newcommand{\cX} {{X}}
\let\oldmarginpar\marginpar
\renewcommand\marginpar[1]{\-\oldmarginpar[\raggedleft\footnotesize #1]%
{\raggedright\footnotesize #1}}
\newcommand{\mn}{\marginpar{\ \ \ \ \ !}}
\definecolor{tj}{rgb}{0.4,0,0.6}
\definecolor{wh}{rgb}{0.7,0.25,0}
\definecolor{kb}{rgb}{0.2,0.7,0.1}
\title{Localization  and Schr\"odinger perturbations of kernels
\footnotetext{2010 MSC: 47A55, 60J35. Keywords: kernel, absorbing set.
The research was partially supported by grants MNiSW N N201 397137, MNiSW N N201 422539,
ANR-09-BLAN-0084-01.}}
\author{Krzysztof Bogdan
\thanks{Institute of Mathematics of the
Polish Academy of Sciences, ul. \'Sniadeckich 8, 00-956 Warszawa, Poland,
bogdan@pwr.wroc.pl}
\and
Wolfhard Hansen 
\thanks{Fakult\"at f\"ur Mathematik, Universit\"at Bielefeld, 
Postfach 100131,
D-33501 Bielefeld, Germany, hansen@math.uni-bielefeld.de
}
\and 
Tomasz Jakubowski
\thanks{Institute of Mathematics and Computer Science, Wroc{\l}aw University of Technology, Wybrze\.ze Wyspia\'nskiego 27, 50-370 Wroc{\l}aw, Poland,
Tomasz.Jakubowski@pwr.wroc.pl}
}
\date{\today}
\begin{document}
\maketitle
\begin{abstract} 
We study iterations of integral  kernels satisfying a transience-type condition and we prove exponential estimates analogous to Gronwall\rq{}s inequality.
As a consequence we obtain estimates of Schr\"odinger perturbations of integral kernels, including Markovian semigroups.
\end{abstract}
\section{Introduction}

To motivate our results we
consider the Gaussian transition density
on $\real^d$,  
\begin{equation*}
        p(s,x,t,y)=\begin{cases}
                     {[4\pi(t- s)]^{-d/2}} \exp
                      \dfrac{-|x-y|^2}{4(t-s)},& \mbox{ if }
                      s<t,\\
                    0,&\mbox{ if } s\ge t,
                      \end{cases}
\end{equation*} 
where $d\ge 1$, $s,t\in\real$ and $x,y\in\real^d$. Note that $-p$ is a  left inverse of $\partial_t+\Delta_y$: 
$$
\int_\real\int_\reald p(s,x,t,y)\left[\partial_t\phi(t,y)+\Delta_y \phi(t,y)\right]dydt=-\phi(s,x),\quad \phi \in C^\infty_c(\real\times \reald).
$$
Let $q(t,y)\geq 0$ be a Borel function on $\real\times \reald$. Let
$p_0=p$, and for $n=1,2,\dots$,
\begin{equation}\label{eq:pG}
p_n(s,x,t,y)=\int_\real \int_\reald p_{n-1}(s,x,u,z)q(u,z)p(u,z,t,y)dudz.
\end{equation}
We define
$
\tilde p= \sum_{n=0}^\infty p_n.
$
Under appropriate integrability  conditions, $-\tilde p$ is the left inverse of  $\partial_t+\Delta_y+q$ (\cite{MR2457489}).
We call $\tilde p$ the Schr\"odinger perturbation of $p$ by $q$, because $\partial_t+\Delta_y+q$ is  an additive perturbation of $\partial_t+\Delta_y$ by the operator of multiplication by $q$. 
We see that $\tilde p(\cdot,\cdot,t,y)$
is a power series of iterates of an integral 
kernel operator applied to 
$p(\cdot,\cdot,t,y)$, which may be considered as a {\it control function}. 

Estimates of such series for rather general kernels are the  main subject of the paper,
motivated by the results of \cite{MR2457489,MR2507445} on transition densities.
The main feature of our approach is majorization of the series by means of 
a control function, e.g. $f$ in our main result, Theorem~\ref{upper estimate}.
The assumptions on the kernel involve 
{\it local smallness} \eqref{j-local} and {\it global boundedness} \eqref{j-global}
with respect to an increasing family of {\it absorbing sets}, which add a strong transience-type property of the kernel to the picture.
A representative application of Theorem~\ref{upper estimate} is given in Example~\ref{ex:pkssss} for the potential kernel of two $1/2$-stable subordinators.

In general we neither assume Chapman-Kolmogorov conditions on the kernel
nor any connection between the kernel and the control function.
However, for Schr\"odinger perturbations,
these two are related by a multiplication operation,
and the setting of {\it space-time} 
is of special interest because it includes transition kernels. The setting is dealt with in Theorem~\ref{main-semi}, which is complemented by Example~\ref{ex:at2} and Corollary~\ref{main-semick}, and illustrated by Example~\ref{ex:Ctd}.

Our results are analogues, and a strengthening, of Khasminski\rq{}s lemma (\cite{MR1329992, MR2471145}), under a transience-type properties
of the kernel. 
They may 
be regarded as extensions of Gronwall\rq{}s    
lemma to the context of kernel operators. 
The results also apply to Schr\"odinger perturbations of continuous-time transition densities  by {\it measures}.
They may be used in discrete time, in fact in quite general settings, including partially ordered state spaces. 
In a  related paper \cite{2011-KBTJSS} we  
use different 
methods to obtain slightly more specific estimates for
Schr\"odinger perturbations of kernels on space-time by {\it functions}.

The paper is composed as follows. In Section~\ref{sec:ab} we consider integral kernels 
on {absorbing} sets. In Section~\ref{sec:mp} we prove estimates of von Neumann series for such kernels in presence of a control function. In Section~\ref{sec:td} we give 
the application to  Schr\"odinger perturbations 
of the potential kernel of two subordinators. We also discuss the local smallness and global boundedness for continuous-time kernels, with focus on transition kernels and singular perturbations, including perturbations by measures.

\section{Kernels and absorbing sets}\label{sec:ab}

Let $(E,\E)$ be  a measurable space and let $K$ be a kernel
on $(E,\E)$ (\cite{MR939365}). That is, $K:E\times \E\to [0,\infty]$, each $K(x,\cdot)$ is a measure on~$(E,\E)$,  and each function $K(\cdot,B)$
 is $\E$-measurable. We write $f\in \E^+$ if $f\colon E\to [0,\infty]$ and $f$ is $\E$-measurable. For $f\in \E^+$ we let 
\begin{equation}\label{eq:ko}
Kf(x)=\int f(y)\,K(x,dy), \quad x\in E,
\end{equation}
and call this $K$ a kernel operator. The operator is additive, positively homogeneous, %
and $Kf_n(x)\uparrow Kf(x)$ whenever $f_n\uparrow f$. 
Conversely, every map from $\E^+$ to~$\E^+$ having these properties is of the form (\ref{eq:ko}), see \cite{MR939365}.
For instance, if $q\in \E^+$,  then {\it the multiplication by $q$}, 
$$
qf(x):=q(x)f(x),\quad x\in E, f\in \E^+,
$$
is a kernel operator.  This is a simple but ambiguous notation, and it should always be clear from the context
which meaning of $q$ we have in mind (the function or the multiplication operator). 
The composition of kernel operators $K$ and $L$ on $\E^+$ and the { composition of kernels}, $KL(x,B)=\int L(y,B)K(x,dy)$ on $(E,\E)$, agree in the sense of (\ref{eq:ko}), and so the composition of kernels is associative. We will often consider the multiplication 
by $1_A$, the indicator function of \hbox{$A\in \E$}.

A set $A\in \E$ is called \emph{$K$-absorbing}, if $K(x,A^c)=0$ for every $x\in A$, that is if $1_AK1_{A^c}=0$. Since $1_E=1_A+1_{A^c}$ 
and $1_A1_{A^c}=0$, $A$ is $K$-absorbing if and only if
\begin{equation}\label{eq:AKA}
1_AK=1_AK1_A
\end{equation}
as kernels.
Clearly, $\emptyset$~and $E$ are $K$-absorbing, and the union and intersection of countably many $K$-absorbing sets are $K$-absorbing.
 If $A$ is $K$-absorbing, then $A$ is $L$-absorbing for any kernel $L\le K$.

\begin{example}\label{ex:p}{\rm
We will generalize the discussion of the Gaussian kernel from Introduction.
Let $(\cX,\cM)$ be a measurable space. Let 
$E=\real\times \cX$, with  the 
$\sigma$-algebra $\E$ generated by the sets $(a,b)\times
A$, where $a,b\in\real$, $a<b$ and $A\in\cM$.
Let $p\colon E\times E\to [0,\infty]$ be  $\E\otimes \E$-measurable and satisfy
\begin{equation}\label{zero}
              p(s,x,t,y)=0, \quad\mbox{ whenever }\quad  s\ge t.
\end{equation} 
Given a measure $\mu$ on~$(E,\E)$, we define the kernel
$K^\mu$,
\begin{equation}\label{eq:Kmu}
       K^\mu f(s,x):=\int p(s,x,u,z)f(u,z)\,d\mu(u,z), \qquad (s,x)\in
       E,\quad f\in \E^+.
\end{equation}
We note that, for every $t\in\real$, 
 the \lq\lq{}open  half-space\rq\rq{} 
$(t,\infty)\times \cX$ and the \lq\lq{}closed half-space\rq\rq{} $[t,\infty)\times \cX$ are absorbing for $K^\mu$.  Thus, the first coordinate has a distinguished role  for {\it space-time} $E=\real\times \cX$, which is the main setting of \cite{2011-KBTJSS}.

In many examples of interest $p$ also satisfies the {Chapman-Kolmogorov equations}, i.e., there is a measure $m$ on~$(\cX,\cM)$ such that
for all $s<u<t$ and $x,y\in\cX$,
\begin{equation}\label{chap-kol}
               p(s,x,t,y)=\int p(s,x,u,z)p(u,z,t,y)\,dm(z). 
\end{equation} 
For the Brownian transition density, $m$ is the Lebesgue measure
on $\real^d$.
}
\end{example}

\begin{example}\label{ex:s}
{\rm
Let $(T,\T,\rho)$ be a measure space. Let $\{K_t\ , t\in T\}$ be a family of kernels on $(E,\E)$ such that $(t,x)\mapsto K_t(x,B)$ is 
$\T\otimes \E$-measurable
 for each $B\in \E$. Then $K:=\int K_t \, \rho(dt)$ is a kernel. Furthermore, if $A\in \E$ is $K_t$-absorbing 
for every $t\in T$, then $A$ is also $K$-absorbing. 

For instance, let $\alpha\in (0,2)$ and let $p_t(y)$ be the density function of the 
$\alpha/2$-stable subordinator $(\eta_t,\ t>0)$ on $\real$.
Recall that $(\eta_t)$ is time-homogeneous and has independent increments,
 and $p_t(y)=0$ if $y\leq 0$. Thus the right half-lines are absorbing for the semigroup $K_t(x,dy):=p_t(y-x)dy$. We have (see, e.g., 
\cite[V.3.4]{MR850715} or \cite[(1.38)]{MR2569321}),
$$
\int_0^\infty p_t(y)\,dt=\Gamma(\alpha/2)^{-1}y^{\alpha/2-1}\ , \quad y>0\ . 
$$
Accordingly, the right half-lines are absorbing for the {\it potential kernel} of $(\eta_t)$,
$$
K(x,A)=\Gamma(\alpha/2)^{-1}\int_{A} (y-x)_+^{\alpha/2-1}\,dy,
$$
and also for 
$$
K^\mu(x,A)=\Gamma(\alpha/2)^{-1}\int_{A}(y-x)_+^{\alpha/2-1}\,\mu(dy),
$$
where $\mu$ is any Borel measure on $\real$.
}
\end{example}

\begin{example}\label{rem:cones}
{\rm 
If $E$ is partially ordered and each measure $K(x,dy)$ is concentrated on $\Gamma_x:=\{y:x\prec y\}$, then the sets  $\Gamma_x$ are $K$-absorbing. This is the case, e.g., for the semigroup and the potential operator of a vector of  subordinators (see also Example~\ref{ex:pkssss}).
}
\end{example}

\begin{example}\label{ex:W}
{\rm
Let $({\mathcal X},\W)$  be a balayage space (\cite[II.4]{MR850715}). Here  ${\mathcal X}$ is a locally compact space with countable base, and $\W$ denotes the class of nonnegative hyperharmonic functions on ${\mathcal X}$ (\cite[III.1]{MR850715}). In particular, each $w\in \W$ is lower semicontinuous.
Let $r$ be a continuous real
potential on~${\mathcal X}$ (\cite[II.5]{MR850715}) and let $K$ be the potential kernel associated with $r$ in the sense of  \cite[II.6.17]{MR850715}. Thus, $K1=r$, and for every bounded Borel measurable
function $f\ge 0$ on~${\mathcal X}$, the function~$Kf$ is a continuous potential, which is harmonic outside the support of~$f$, see \cite[III.6.12]{MR850715}. 
Let $w\in \W$ and $A=\{x\in {\mathcal X}: w=0\}$.
Then $A$ is closed and $K$-absorbing.
Indeed, let $B$ be a compact in~$A^c$. There exists a number $c>0$ such that $cw>r$ on $B$. By the minimum principle (\cite[III.6.6]{MR850715}), $cw\ge K1_B$ everywhere, hence
$K1_B=0$ on $A$. 
In \cite[V.1]{MR850715} such sets $A$ are called {\it absorbing}, too, and they have a number of equivalent characterizations, of which we mention two:
(a) $A$ is closed and $P_t(x,{\mathcal X}\setminus A)=0$, for every $t>0$, $x\in A$, and sub-Markov semigroup $(P_t)_{t>0}$ having $\W$ as excessive functions, and (b) $A$ is closed and $P^x[X_t\in A\cup \{\partial\}]=1$
for every $t>0$, $x\in A$,  and Markov process $(X_t,P^x)_{t>0,x\in {\mathcal X}}$ having $\W$ as excessive functions and $\partial$ as the cementary state. The details are given in \cite[V.1.2]{MR850715}.

Furthermore,  if $A$ is any Borel set containing  the (fine) superharmonic support of~$r$, then $K1_A=K$ (\cite[II.6.3]{MR850715}), and hence $A$  is $K$-absorbing.
}
\end{example}

We will collect a few simple facts about $K$-absorbing sets.
\begin{lemma}\label{triv}
Let $A$ be  $K$-absorbing and $m\in\nat$. Then 
\begin{equation}\label{aka}
1_AK^{m}=(1_AK)^m=1_AK^m1_A.
\end{equation} 
In particular, $A$ is $K^m$-absorbing. If furthermore $f\in\E^+$  and $c\ge 0$ are such that 
$Kf\le c f$ on~$A$, then $K^mf\le c^m f$ on~$A$.
\end{lemma}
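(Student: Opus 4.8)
The plan is to prove the three identities in \eqref{aka} by induction on $m$, using the defining relation \eqref{eq:AKA} of a $K$-absorbing set, namely $1_AK=1_AK1_A$, together with associativity of composition of kernels. For the base case $m=1$ all three expressions coincide with $1_AK$ (the last one by \eqref{eq:AKA}). For the inductive step, suppose $1_AK^m=(1_AK)^m=1_AK^m1_A$. Then
\begin{equation*}
1_AK^{m+1}=(1_AK^m)K=(1_AK^m1_A)K=(1_AK)^m(1_AK)=(1_AK)^{m+1},
\end{equation*}
where the middle equality inserts $1_A$ using the induction hypothesis and then the occurrence of $1_AK$ is rewritten via \eqref{eq:AKA} to regroup the product. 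Applying \eqref{eq:AKA} once more to the leftmost factor, $(1_AK)^{m+1}=(1_AK1_A)(1_AK)^m=1_AK(1_AK)^m=1_AK^{m+1}1_A$ after reusing the hypothesis to collapse $(1_AK)^m$ back to $1_AK^m1_A$ and noting $1_A1_A=1_A$. This gives all three identities for $m+1$. The assertion that $A$ is $K^m$-absorbing is then immediate: $1_{A^c}$-part of $1_AK^m$ vanishes because $1_AK^m=1_AK^m1_A$, i.e. $1_AK^m1_{A^c}=0$.

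For the final claim, I would argue again by induction on $m$, starting from the hypothesis $Kf\le cf$ on $A$, i.e. $1_AKf\le c\,1_Af$ pointwise. Assume $1_AK^mf\le c^m 1_Af$. Using the identity $1_AK^{m+1}=1_AK^m1_AK$ just proved (or equivalently $1_AK\cdot K^m$ regrouped), one writes
\begin{equation*}
1_AK^{m+1}f=1_AK(1_AK^mf)\le 1_AK(c^m 1_Af)=c^m\,1_AK(1_Af)=c^m\,1_AKf\le c^{m+1}1_Af,
\end{equation*}
where monotonicity of the kernel operator $K$ (stated in the text: $Kg_n\uparrow Kg$ when $g_n\uparrow g$, hence $K$ preserves pointwise inequalities between elements of $\E^+$) is applied to the inequality $1_AK^mf\le c^m 1_Af$, and positive homogeneity pulls out the scalar $c^m$. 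The only mildly delicate point is to make sure the regrouping $1_AK^{m+1}f=1_AK\bigl(1_AK^m f\bigr)$ is legitimate as an identity of functions, which follows from associativity of kernel composition together with $1_AK^m=1_AK^m1_A$ from \eqref{aka}, so that $1_AK^{m+1}=(1_AK)(1_AK^m)$ acts on $f$ as $1_AK$ applied to $1_AK^mf$.

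I expect no genuine obstacle here; the statement is elementary and the proof is a bookkeeping induction. The one thing to be careful about is the handling of the value $+\infty$: since $f$ may take the value $\infty$ and $c$ may be $0$, the inequality $K^mf\le c^mf$ on $A$ should be read with the convention $0\cdot\infty=0$, and monotonicity and homogeneity of kernel operators are exactly the properties (recalled right after \eqref{eq:ko}) that make the manipulations valid in $[0,\infty]$. A cosmetic alternative that avoids writing the induction twice is to first establish \eqref{aka}, then note $K^mf\le c^mf$ on $A$ reads $1_AK^mf=(1_AK)^mf\le c^m1_Af$ and prove this by iterating the one-step estimate $1_AK g\le 1_AK(1_A g)\le c\,1_A g$ valid for any $g\in\E^+$ with $g\le f$ on... — but the direct double induction above is cleanest and I would present that.
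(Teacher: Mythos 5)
Your proof is correct and follows essentially the same inductive strategy as the paper's: establish \eqref{aka} by induction starting from \eqref{eq:AKA}, then deduce $K^m f\le c^m f$ on $A$ by iterating the one-step bound via the identity $1_AK^m=(1_AK)^m$. The paper's version is more compressed (it derives the exponential estimate in a single line rather than a second induction), but the underlying manipulations are the same.
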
 

\begin{proof} The case of $m=1$ follows from (\ref{eq:AKA}). If (\ref{aka}) holds for some $m\in\nat$, then
$$ 
1_A K^{m+1}=1_AK^mK=(1_AK)^m1_AK=(1_AK)^m1_AK1_A
$$
showing that (\ref{aka}) holds for $m+1$, and we can use induction. Further, $1_AKf\leq cf$ yields that
$    1_A K^{m}f=(1_A K)^mf\le  c^m  f$.
\end{proof}

\begin{lemma}\label{difference}
Let $A$ and $B$ be $K$-absorbing, $A\subset B$, and  $m\in\nat$. Then
\begin{equation}\label{klm}
             1_BK^m1_{B\setminus A}= 1_B (K1_{B\setminus A})^m=1_{B\setminus A}(K 1_{B\setminus A})^m.
\end{equation} 
\end{lemma}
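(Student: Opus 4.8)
The plan is to reduce the whole statement to the single elementary fact that $1_AK1_{B\sms A}=0$. Put $C=B\sms A$. Since $A\subset B$ we have $1_B=1_A+1_C$; moreover $1_A1_C=0$ (as $A\cap C=\emptyset$) and $1_B1_C=1_C$ (as $C\subset B$). Because $A$ is $K$-absorbing, (\ref{eq:AKA}) gives $1_AK=1_AK1_A$, and multiplying on the right by $1_C$ yields $1_AK1_C=1_AK1_A1_C=0$. Consequently
\[
1_BK1_C=(1_A+1_C)K1_C=1_AK1_C+1_CK1_C=1_CK1_C .
\]

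Granting this, the equality of the last two expressions in (\ref{klm}) is immediate: writing $(K1_C)^m=(K1_C)\circ(K1_C)^{m-1}$ and using associativity of composition of kernels,
\[
1_B(K1_C)^m=(1_BK1_C)(K1_C)^{m-1}=(1_CK1_C)(K1_C)^{m-1}=1_C(K1_C)^m .
\]
It remains to prove $1_BK^m1_C=1_B(K1_C)^m$, which I would do by induction on $m$. The case $m=1$ is trivial. For the step, apply Lemma~\ref{triv} to the $K$-absorbing set $B$: $1_BK^{m+1}=(1_BK)^{m+1}=(1_BK)(1_BK)^m=1_BK\,(1_BK^m)$, whence
\[
1_BK^{m+1}1_C=1_BK\,(1_BK^m1_C)=1_BK\,(1_B(K1_C)^m)=1_BK\,(1_C(K1_C)^m)=1_BK1_C\,(K1_C)^m=1_B(K1_C)^{m+1},
\]
the second equality being the inductive hypothesis and the third the identity just proved.

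There is no analytic obstacle here; the work is entirely bookkeeping the positions of the indicators $1_A,1_B,1_C$ relative to powers of $K$. The two pivotal points are: (a) $1_AK1_C=0$, which is exactly where absorption of the smaller set $A$ is used; and (b) Lemma~\ref{triv} for $B$, which allows $K^m$ to be sandwiched between copies of $1_B$ and so exposes an inner block $1_BK^m1_C$ to which the induction hypothesis applies. I expect the only mild subtlety to be remembering to establish $1_B(K1_C)^m=1_C(K1_C)^m$ first, since it is needed inside the inductive step for the other equality.
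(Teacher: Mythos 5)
Your proof is correct and follows essentially the same route as the paper: both hinge on the identity $1_BK1_{B\setminus A}=1_{B\setminus A}K1_{B\setminus A}$ (absorption of $A$) together with Lemma~\ref{triv} applied to $B$ to write $1_BK^m=(1_BK)^m$, and then peel off the factors; the paper does the peeling in a single chain of equalities where you use explicit induction, but the argument is the same.
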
 
\begin{proof} Since $A$ is $K$-absorbing, 
$1_B K 1_{B\setminus A}=1_A K 1_{B\setminus A}+
1_{B\setminus A} K 1_{B\setminus A}=1_{B\setminus A} K 1_{B\setminus A}$.
 By this and Lemma \ref{triv}  (with $B$ in place of $A$), 
\begin{align*}
1_B K^m 1_{B\setminus A}&=
(1_B K)^m1_{B\setminus A}=
(1_B K)^{m-1}1_{B\setminus A}K1_{B\setminus A}
=\ldots=1_{B}(K 1_{B\setminus A})^m \\
&
=1_{B}K1_{B\setminus A}(K 1_{B\setminus A})^{m-1}
=1_{B\setminus A}K1_{B\setminus A}(K 1_{B\setminus A})^{m-1}
=1_{B\setminus A}(K 1_{B\setminus A})^m.
\end{align*}
\end{proof}

The next result is a slight modification of \cite[Proposition 7.4]{MR2058029}.
\begin{proposition}\label{series-equiv}
Let $A$ be $K$-absorbing, and let $f\in\E^+$ and $c\ge 1$ be such that 
$\sum_{m=0}^\infty K^m f\le c f$ on~$A$. Then, for $n={0},1,\ldots$, we have
\begin{equation}\label{eq:eKn}
K^n f\le c\bigl(1-1/c\bigr)^n f \on A.
\end{equation}
\end{proposition}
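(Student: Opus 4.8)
The plan is to reduce to the case $A=E$ and then extract a geometric recursion by applying the powers $K^n$ to the single hypothesis $\sum_{m\ge 0}K^mf\le cf$. For the reduction, note that by Lemma~\ref{triv} we have $1_AK^m=1_AK^m1_A$, so on $A$ the functions $K^mf$ and $K^m(1_Af)$ agree; replacing $K$ by the kernel $1_AK1_A$ and $f$ by $1_Af$, we may assume $A=E$ and that $\sum_{m=0}^\infty K^mf\le cf$ holds on all of $E$. At points where $f=\infty$ the asserted inequality \eqref{eq:eKn} is trivial when $c>1$, so we may also assume $f$ finite there; the endpoint $c=1$, where \eqref{eq:eKn} reads $K^nf\le 0$ for $n\ge 1$, is covered by the same computation.

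Next I would set $u:=\sum_{m=0}^\infty K^mf$, so that $0\le u\le cf$, and $v_n:=K^nu=\sum_{m\ge n}K^mf$ for $n\ge 0$. There are two facts to record. From $u=f+Ku$ one gets, on applying $K^n$,
\[
v_n=K^nf+v_{n+1}\qquad(n\ge 0),
\]
so in particular $K^nf\le v_n$ (as $v_{n+1}\ge 0$). Applying $K^n$ to $u\le cf$ and using that kernel operators are monotone,
\[
v_n=K^nu\le cK^nf\qquad(n\ge 0).
\]

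To finish, multiply the first display by $c$ and use $cK^nf\ge v_n$ from the second:
\[
cv_n=cK^nf+cv_{n+1}\ge v_n+cv_{n+1},
\]
whence $cv_{n+1}\le(c-1)v_n$, i.e.\ $v_{n+1}\le(1-1/c)v_n$ for all $n\ge 0$. Iterating from $v_0=u\le cf$ gives $v_n\le(1-1/c)^nu\le c(1-1/c)^nf$, and since $K^nf\le v_n$ this is precisely \eqref{eq:eKn}.

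I expect no deep difficulty here: the only fiddly points are the passage to $A=E$ via Lemma~\ref{triv} and the usual care with $+\infty$ in the additive steps (hence the preliminary reduction to finite $f$). The whole argument rests on the one observation that applying $K^n$ to the hypothesis gives $v_n\le cK^nf$; set against the identity $v_n=K^nf+v_{n+1}$, this immediately forces the geometric decay of $v_n$, and a fortiori of $K^nf$.
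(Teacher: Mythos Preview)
Your argument is correct and is essentially the paper's proof. The paper sets $g=\sum_{m\ge 0}K^mf$, observes $g=f+Kg\ge g/c+Kg$ on $A$, hence $Kg\le(1-1/c)g$ on $A$, and then invokes Lemma~\ref{triv} to iterate on $A$ directly, finishing with $K^nf\le K^ng\le(1-1/c)^ng\le c(1-1/c)^nf$. Your $u$ and $v_n$ are the paper's $g$ and $K^ng$; you reach the same recursion $v_{n+1}\le(1-1/c)v_n$ by a slightly longer route (applying $K^n$ first and combining two inequalities), and your preliminary reduction to $A=E$ is unnecessary since Lemma~\ref{triv} already handles the absorbing set in the iteration step.
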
 
\begin{proof} 
Let $g=\sum_{m=0}^\infty K^m f$. We see that $g=f+Kg\ge (1/c) g + Kg$ on~$A$, hence $Kg\le (1-1/c)g$ on~$A$. By Lemma \ref{triv}, for every $n\in\nat$,
$$
      K^n f\le K^n g\le \bigl(1-1/c\bigr)^n g
\le c\bigl(1-1/c\bigr)^n f \on A.
$$
The case of $n=0$ is trivial. 
\end{proof}
\begin{remark}\label{rem:an} 
{\rm 
We note that, conversely, (\ref{eq:eKn}) yields that 
$$\sum_{n=0}^\infty K^n f\leq  
\sum_{n=0}^\infty c (1-1/c)^n f= c^2 f \on  A. 
$$
Thus, comparability of $\sum K^n f$ and $f$ is equivalent to exponential decay of $K_n f$.
}
\end{remark}
\begin{remark}\label{rem:an1} 
{\rm 
We will consider $f=1$, the constant function. For every $a\geq 1$,
there exist kernels $K$ such that $\sup_{x\in E} K1(x)=a$, but  $\sumn K^m1$ is bounded (see \cite[Proposition 10.1]{MR2207878}). 
Then the estimate for $K^n1$ given in (\ref{eq:eKn}) is asymptotically better than the more evident upper bound by $a^n$. 
}
\end{remark}

\section{Localization on differences of absorbing sets}\label{sec:mp}

We first prove a discrete variant of Gronwall\rq{}s lemma.
\begin{lemma}\label{gronwall}
Let $\a,\delta\in [0,\infty)$ and $\g_1,\dots,\g_k\in\real$ be such that for $j=1,\dots,k$, we have
$
\g_j\le \a+\delta\!\sum\limits_{1\le i< j}\g_i
$.
Then  $\g_j\le \a (1+\delta)^{j-1}$  for every $j=1,\dots,k$. 
\end{lemma}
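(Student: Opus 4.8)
The plan is to proceed by induction on $j$, which is the natural route given that the hypothesis on $\gamma_j$ only involves $\gamma_1,\dots,\gamma_{j-1}$.

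First I would dispose of the base case $j=1$: the empty sum $\sum_{1\le i<1}\gamma_i$ vanishes, so the hypothesis reads $\gamma_1\le\alpha=\alpha(1+\delta)^0$, as required. For the inductive step, assume $\gamma_i\le\alpha(1+\delta)^{i-1}$ for all $i<j$. Since $\delta\ge0$ and each bound $\alpha(1+\delta)^{i-1}$ is nonnegative (because $\alpha\ge0$), I may substitute these bounds into the hypothesis to get
$$
\gamma_j\le \alpha+\delta\sum_{i=1}^{j-1}\alpha(1+\delta)^{i-1}
=\alpha\Bigl(1+\delta\sum_{\ell=0}^{j-2}(1+\delta)^\ell\Bigr).
$$
Then I would evaluate the geometric sum. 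If $\delta>0$ it equals $\bigl((1+\delta)^{j-1}-1\bigr)/\delta$, so $\delta\sum_{\ell=0}^{j-2}(1+\delta)^\ell=(1+\delta)^{j-1}-1$, giving $\gamma_j\le\alpha(1+\delta)^{j-1}$; if $\delta=0$ the sum term drops out and $\gamma_j\le\alpha=\alpha(1+\delta)^{j-1}$ directly. Either way the induction closes.

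There is no real obstacle here; the only points requiring a word of care are (i) handling $\delta=0$ separately (or observing the geometric-series identity $\delta\sum_{\ell=0}^{j-2}(1+\delta)^\ell=(1+\delta)^{j-1}-1$ holds for all $\delta\ge0$ by telescoping, avoiding division), and (ii) noting that even though the $\gamma_i$ may be negative, replacing them by their upper bounds is legitimate precisely because $\delta\ge0$ and those upper bounds are themselves $\ge0$. Everything else is a routine finite induction, so I would keep the write-up short.
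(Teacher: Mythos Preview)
Your proof is correct and is essentially the same induction argument as the paper's, which carries out exactly this step in a single line. Your additional remarks on the $\delta=0$ case and on the legitimacy of the substitution when some $\gamma_i$ may be negative are sound but go beyond what the paper records.
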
 
\begin{proof} We proceed by induction:         
$\g_{k+1}\le \a +\delta\sum_{i=1}^k\a(1+\delta)^{i-1}=\a(1+\delta)^k$.
\end{proof} 
%
We fix $K$-absorbing sets $A_1,\dots,A_k$ such that
$$
            A_1\subset A_2\subset \dots \subset A_k.
$$
Taking $A_0:=\emptyset$, for $1\le j\le k$ we define
{\it slices} $S_j:=A_j\setminus A_{j-1}$ and  operators
$$
K_j:=K 1_{S_j}.
$$
Thus, in Example \ref{ex:p} we may choose {$-\infty<t_k<\dots<t_1<\infty$}, and let $A_j$ be the open half-space 
$(t_j,\infty)\times \cX$ or the closed half-space $[t_j,\infty)\times \cX$. Then
each slice $S_j$ equals $I_j\times \cX$, 
where $I_j$ is an interval,
see also Example~\ref{ex:pkssss} and Figure~\ref{pic:cw}.

\begin{theorem}\label{upper estimate}
Let $0\le \eta<1$,  $\beta\ge 0$,
and  $f\in\E^+$ be such that
\begin{equation}\label{j-local}
           K_jf\le \eta f  \quad \mbox{on } S_j,\quad j=1,\ldots,k,
\end{equation} 
and
\begin{equation}\label{j-global}
           K_jf\le \beta f  \quad \mbox{on } A_k, \quad j=1,\ldots,k.
\end{equation} 
Then, for  $j=1,\ldots,k$,
\begin{equation}\label{main}
\sumn K^m f\le  \frac1{1-\eta}\left(1+\frac{\beta}{1-\eta}\right)^{j-1}\,f
\on S_j.
\end{equation} 
\end{theorem}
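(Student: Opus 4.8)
The plan is to prove \eqref{main} by induction on the slice index $j$, for $u:=\sumn K^m f$. Since $u$ may take the value $+\infty$ on $S_j$, I work throughout with the partial sums $u_N:=\sum_{m=0}^N K^m f$, which increase to $u$ and satisfy $u_0=f$ and $u_N=f+Ku_{N-1}$. Put $\alpha=1/(1-\eta)$, $\delta=\beta/(1-\eta)$ and $c_j:=\alpha(1+\delta)^{j-1}$, so that the target is $u\le c_j f$ on $S_j$; the elementary identity $1+\beta\sum_{i<j}c_i=(1-\eta)c_j$, equivalently $\alpha+\delta\sum_{i<j}c_i=c_j$ (which is exactly the recursion appearing in Lemma~\ref{gronwall}), will be used at the end of each step.

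Two ingredients feed the induction. First, \emph{local decay}: each $K_j=K1_{S_j}$ has all measures $K_j(x,\cdot)$ concentrated on $S_j$, so $S_j$ is $K_j$-absorbing and \eqref{j-local} together with Lemma~\ref{triv} gives $K_j^m f\le\eta^m f$ on $S_j$ for all $m$; writing $\tilde K_j:=1_{S_j}K1_{S_j}=1_{S_j}K_j$, so that $\tilde K_j^m=1_{S_j}K_j^m$, this yields $\tilde K_j^m(1_{S_j}f)=1_{S_j}K_j^m(1_{S_j}f)\le 1_{S_j}K_j^m f\le\eta^m 1_{S_j}f$ and hence $\sum_{m\ge0}\tilde K_j^m(1_{S_j}f)\le(1-\eta)^{-1}1_{S_j}f$. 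Second, \emph{localization} (in the spirit of Lemma~\ref{difference}): because $A_j$ is $K$-absorbing and $A_j=S_1\cup\dots\cup S_j$ disjointly, $1_{S_j}K=1_{S_j}K1_{A_j}=\sum_{i=1}^j 1_{S_j}K_i$, so on $S_j$ we have $u_N=f+\sum_{i=1}^j K_i u_{N-1}$.

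Now fix $j$ (the case $j=1$ is included, with empty sums over $i<j$) and assume $u\le c_i f$ on $S_i$ for all $i<j$. On $S_j$, localization yields $u_N\le f+\sum_{i<j}K_i u_{N-1}+K_j u_{N-1}$. For $i<j$, $1_{S_i}u_{N-1}\le 1_{S_i}u\le c_i 1_{S_i}f$, hence $K_i u_{N-1}=K(1_{S_i}u_{N-1})\le c_i K(1_{S_i}f)=c_i K_i f\le c_i\beta f$ on $S_j$ by \eqref{j-global}; moreover $1_{S_j}K_j u_{N-1}=\tilde K_j h_{N-1}$ with $h_N:=1_{S_j}u_N$. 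With $d_j:=1+\beta\sum_{i<j}c_i=(1-\eta)c_j\ge 1$ this gives the self-contained recursion $h_N\le d_j 1_{S_j}f+\tilde K_j h_{N-1}$ and $h_0=1_{S_j}f\le d_j 1_{S_j}f$. A one-line induction on $N$ then gives $h_N\le d_j\sum_{m=0}^N\tilde K_j^m(1_{S_j}f)$, so by local decay $h_N\le d_j(1-\eta)^{-1}1_{S_j}f=c_j 1_{S_j}f$ for every $N$; letting $N\to\infty$ yields $1_{S_j}u\le c_j 1_{S_j}f$, i.e.\ $u\le c_j f$ on $S_j$. This completes the induction and proves \eqref{main}.

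I expect the crux to be setting up the self-contained recursion $h_N\le d_j 1_{S_j}f+\tilde K_j h_{N-1}$: it needs $1_{S_j}Ku_{N-1}$ to split cleanly along the slices (which uses $A_j$-absorption), the terms with $i<j$ to be pushed into the control function via the inductive bounds and the global estimate \eqref{j-global}, and the $i=j$ term to be recognized as $\tilde K_j$ applied to $h_{N-1}=1_{S_j}u_{N-1}$ alone, so that iterating in $N$ only involves the single kernel $\tilde K_j$, whose iterates decay locally. Passing to the partial sums $u_N$ rather than $u$ is what makes this iteration legitimate without any a priori finiteness or integrability of $u$ on $S_j$; the remaining algebra is the geometric bookkeeping recorded in Lemma~\ref{gronwall}.
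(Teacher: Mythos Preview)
Your proof is correct and complete. It shares the overall scaffolding with the paper's argument---induction on the slice index, the decomposition $1_{S_j}K=\sum_{i\le j}1_{S_j}K_i$ coming from $A_j$ being $K$-absorbing, and bounding the $i<j$ terms via the induction hypothesis and \eqref{j-global}---but it handles the diagonal term $K_j$ differently.

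The paper first records the rough a~priori bound $g_n\le\sum_{m=0}^n(k\beta)^m f$ on $A_k$ (from $Kf\le k\beta f$ and Lemma~\ref{triv}), which guarantees the existence of a \emph{least} constant $\gamma$ with $g_n\le\gamma f$ on $S_j$; plugging this back into $g_n\le f+Kg_n$ yields the self-improving inequality $\gamma\le(1+\beta\sum_{i<j}\gamma_i)+\eta\gamma$, whence $\gamma\le\gamma_j$. You instead iterate the recursion $h_N\le d_j\,1_{S_j}f+\tilde K_j h_{N-1}$ explicitly in $N$ and sum the resulting Neumann series for $\tilde K_j$ via \eqref{j-local}. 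Your route has the pleasant feature that it never needs the rough a~priori bound (nor the existence of a finite optimal $\gamma$), since everything stays within finite partial sums and a convergent geometric series; the paper's bootstrap, once the rough bound is in place, is a one-line shortcut to the same constant. The final algebra $d_j/(1-\eta)=c_j$ that you verify directly is exactly the content of Lemma~\ref{gronwall} in the paper's presentation.
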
 
\begin{proof}Let $n\in\nat$ and $   g_n:=\sum_{m=0}^n K^mf$.
For $j=1,\ldots,k$, we (recursively) define
\begin{equation}\label{g-rec}
          \g_j:=\frac 1{1-\eta}\bigl(1+\beta\sum_{1\le i<j}\g_i\bigr).
\end{equation} 
We will prove by induction that $g_n\le \g_j f$ on $S_j$.
Let $1\le j\le k$, and
\begin{equation}\label{induction}
  g_n\le \g_i f \on S_i, \qquad\mbox{  for every }1\le i<j.
\end{equation} 
Trivially, this assumption is satisfied for $j=1$. 
By (\ref{j-global}),  $Kf\le k\b f$ on~$A_k$.  By Lemma~\ref{triv} we obtain a rough bound, $ g_n\le  \sum_{m=0}^n (k\b)^m f$ on $A_k$.
Let $\g\geq 0$ be the smallest real number such that $g_n \le \gamma f$ on~$S_j$. 
If $j<l\le k$, then $1_{S_j}K_l=0$. By  (\ref{induction}) and (\ref{j-global}) for all $x\in S_j$ we have, 
\begin{eqnarray*} 
   g_n(x)&\le& f(x)+Kg_n(x)=f(x)+\sum_{i=1}^j K_i g_n(x)\\
&\le&f(x)+\sum_{i=1}^{j-1} \g_i K_if(x)+\g
K_jf(x)
\le \bigl(1+\beta\sum_{i=1}^{j-1} \g_i\bigr)f(x) +\g \eta f(x).
\end{eqnarray*} 
Thus $\g\le \g_j$ (see  (\ref{g-rec})),
$g_n\le \g_j f$ on~$S_j$, and the result follows by Lemma~\ref{gronwall}.
\end{proof} 

\begin{remark}\label{sje} {\rm We shall refer to (\ref{j-local}) as  {\it local smallness} and to (\ref{j-global}) as {\it global boundedness}. 
In many important cases, the local smallness already implies the global boundedness with $\beta=\eta$. In particular, it is so in Example \ref{ex:W}, if   $f,1\in \W$.
This follows from the minimum principle \cite[III.6.6]{MR850715} applied to the functions $\eta f-K1_L\min\{f,n\}$, for compacts sets $L\subset S_j$ and
$n\in\nat$.
It is also true in Example \ref{ex:p} provided  
$f=p(\cdot,\cdot,t,y)$,
each $A_j$~is a~half-space, $\mu$ does not charge the ``hyperplanes'' $\{t\}\times \cX$, $t\in \real$, and the Chapman-Kolmogorov equations are satisfied, see Lemma {\ref{p-local-1}} below.
}
\end{remark}

The following result is motivated by Proposition~\ref{series-equiv} and Remark~\ref{rem:an}.
\begin{corollary}\label{main'}
Assume $c> 1$ and ${\Nc} \in\nat$ are such that ${\etac} := c\left(1-1/c\right)^{\Nc }<1$.  Let $\beta\ge 0$ and  $f\in\E^+$ be such that
$Kf\le \beta f$ on~$A_k$ and
\begin{equation}\label{local}
\sumn K_j^mf\le cf \on S_j
\end{equation} 
for every $1\le j \le k$.
Then, for every $1\le j \le k$,
\begin{equation}\label{eq:Kmct}
       \sumn K^mf\le \left(\sum_{n=0}^{\Nc -1} \beta ^n\right)
\frac 1{1-\etac }
\left(1+\frac{\beta}{1-\etac }\right)^{j-1}  \, f  \on  S_j.
\end{equation} 
If {\rm (\ref{local})} holds on~$A_k$ for every $1\le j \le k$, then $Kf\le ck f$ on~$ A_k$.
\end{corollary}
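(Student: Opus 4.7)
The plan is to reduce to Theorem~\ref{upper estimate} applied to the iterated kernel $Q:=K^N$ on the same chain of $K$-absorbing sets $A_1\subset\dots\subset A_k$, and then reassemble $\sum_m K^m f$ by regrouping the series into blocks of $N$ consecutive powers of $K$.

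First I would verify local smallness of $Q$ on $S_j$. For $x\in S_j\subset A_j$, Lemma~\ref{difference} applied with $A_{j-1}\subset A_j$ gives the identity $1_{A_j}K^N 1_{S_j}=1_{A_j}(K1_{S_j})^N=1_{A_j}K_j^N$, so $Q_j f(x)=K^N 1_{S_j}f(x)=K_j^N f(x)$ on $S_j$. The slice $S_j$ is $K_j$-absorbing because $K_j=K1_{S_j}$ automatically concentrates on $S_j$, and the hypothesis $\sum_m K_j^m f\le cf$ on $S_j$ is precisely the setup of Proposition~\ref{series-equiv}, which delivers $K_j^n f\le c(1-1/c)^n f$ on $S_j$ for every $n$; in particular $Q_j f\le\eta f$ on $S_j$. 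For global boundedness, iterating $Kf\le\beta f$ on the $K$-absorbing set $A_k$ via Lemma~\ref{triv} gives $K^m f\le\beta^m f$ on $A_k$, and in particular $Q_j f\le K^N f\le\beta^N f$ on $A_k$. Plugging $(Q,f,\eta,\beta^N)$ into Theorem~\ref{upper estimate} then yields
\[
\sum_{i=0}^\infty Q^i f\le\frac{1}{1-\eta}\Bigl(1+\frac{\beta^N}{1-\eta}\Bigr)^{j-1}f\qquad\text{on }S_j.
\]

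Next I would unfold the blocks via $\sum_{m=0}^\infty K^m f=\sum_{i=0}^\infty Q^i(Pf)$ with $P:=\sum_{l=0}^{N-1}K^l$. Global boundedness summed in $l$ gives $Pf\le\bigl(\sum_{l=0}^{N-1}\beta^l\bigr)f$ on $A_k$; since $A_k$ is $K$- and hence $Q$-absorbing, every $Q^i(x,\cdot)$ with $x\in A_k$ is supported in $A_k$, so the pointwise bound on $Pf$ transports through each $Q^i$ and produces exactly the leading factor $\sum_{l=0}^{N-1}\beta^l$ of \eqref{eq:Kmct}. The final assertion of the corollary is a one-liner: on $A_k$ the absorbing property gives $Kf=\sum_{j=1}^k K_j f$, and the strengthened hypothesis bounds $K_j f\le\sum_m K_j^m f\le cf$ there, so $Kf\le ckf$ on $A_k$.

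The one delicate point is the exact exponent on $\beta$ inside the Gronwall factor $\bigl(1+\beta/(1-\eta)\bigr)^{j-1}$ of \eqref{eq:Kmct}: the block argument above naturally produces $\beta^N$ in that slot, which coincides with (or is smaller than) $\beta$ when $\beta\le 1$ and gives the stated bound directly. To match the cleaner form with $\beta$ for $\beta>1$ one rerun the inductive argument of Theorem~\ref{upper estimate} starting from the identity $u=Pf+K^N u$ on $A_k$ (with $u:=\sum_m K^m f$), decomposing $K^N u=\sum_{i\le j}K^N 1_{S_i}u$ slice by slice and using $K^N 1_{S_j}f=K_j^N f\le\eta f$ on $S_j$ to close the recursion — this bookkeeping, where the precise exponent must be tracked, is the main obstacle.
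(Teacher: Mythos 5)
Your reduction to Theorem~\ref{upper estimate} for the iterated kernel $Q=K^{N}$ --- local smallness of $Q_j$ via Lemma~\ref{difference} and Proposition~\ref{series-equiv}, the prefactor $\sum_{n=0}^{N-1}\beta^{n}$ obtained by regrouping the von Neumann series into blocks of length $N$ and pushing the bound on $Pf$ through the $Q$-absorbing set $A_k$, and the one-line proof of the final assertion --- is exactly the paper's argument, and every step you actually carry out is correct.

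The ``delicate point'' you flag at the end is not a defect of your write-up: it is a genuine gap in the paper's own proof. The paper invokes Theorem~\ref{upper estimate} for $K^{N}$ with the same global constant $\beta$, but the hypothesis that must be verified is $K^{N}1_{S_i}f\le\beta f$ on $A_k$, and what follows from $Kf\le\beta f$ on $A_k$ (via Lemma~\ref{triv} and the absorption of $A_k$) is only $K^{N}1_{S_i}f\le K^{N}f\le\beta^{N}f$ there. The constant $\beta$ itself can fail: take $E=\{1,2\}$, $A_1=\{1\}$, $K(1,\cdot)=a\,\varepsilon_1$, $K(2,\cdot)=b\,\varepsilon_1+d\,\varepsilon_2$, $f\equiv1$; then $K^{2}1_{S_1}f(2)=b(a+d)$, which for $a=d=0.9$, $b=10$ exceeds the optimal $\beta=\max(a,b+d)=10.9$. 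So your argument honestly delivers \eqref{eq:Kmct} with $\beta^{N}$ in place of the inner $\beta$, which coincides with the printed form only when $\beta\le1$. Your proposed repair (rerunning the induction of Theorem~\ref{upper estimate} directly, splitting $Kg_n=\sum_{i\le j}K_ig_n$ on $S_j$ and iterating only the diagonal term $K_j$, which reads values on $S_j$ alone, $N$ times) does close the recursion, but as $\gamma_j\le c\bigl(1+\beta\sum_{i<j}\gamma_i\bigr)/(1-\eta)$, yielding $\frac{c}{1-\eta}\bigl(1+\frac{c\beta}{1-\eta}\bigr)^{j-1}f$ --- again a correct exponential bound, but not the printed constant either. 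In short: your proof is complete for a correct variant of \eqref{eq:Kmct}; the exact constant in \eqref{eq:Kmct} is not justified by the paper's proof as written, and for the downstream use (Corollary~\ref{main-semi'}, where only the existence of some finite $C$ matters) any of these versions suffices.
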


\begin{proof}
 Let $1\le j\le k$. Since each $K_j^mf$ vanishes on~$A_{j-1}$, 
(\ref{local}) means that $\sumn K_j^m f\le cf$ on~$A_j$.
By a remark following (\ref{eq:AKA}),  $A_j$ is $K_j$-absorbing. By Lemma \ref{difference} and Proposition \ref{series-equiv},
$$
(K^{\Nc })(1_{S_j}f)=(K_j)^{\Nc } f\le \etac  f \on A_j.
$$
An application of  Theorem~\ref{main} yields that
$$
           \sumn (K^{\Nc })^m f\le \frac 1{1-\etac }
\left(1+\frac{\beta}{1-\etac }\right)^{j-1} f \on S_j.
$$
By Lemma \ref{triv}, 
$\sum_{n=0}^{\Nc -1} K^n f\le \sum_{n=0}^{\Nc -1} \beta^n f$ on~$A_k$.
We finally note that
$$
  \sumn K^m f= \sumn (K^{\Nc })^m\left(\sum_{n=0}^{\Nc -1} K^n f\right)
             \le \left(\sum_{n=0}^{\Nc -1} \beta^n \right)\sumn (K^{\Nc })^m f.
$$
If (\ref{local}) holds even on~$A_k$ for $1\le j \le k$, then $Kf\le \sum_{j=1}^k K_j f\le ck f$ on~$ A_k$, {and we can take $\beta=ck$}.
\end{proof}

\section{Examples and Applications}\label{sec:td}
We may use Theorem~\ref{upper estimate} to estimate Schr\"odinger-type perturbations of kernels.
As a rule, auxiliary estimates of the kernels are needed for 
such applications. 
\begin{example}\label{ex:pkssss}
{\rm
For $t>0$ and $x\in \real$ we define
$$
f_t(x)=\left\{
\begin{array}{cc}
(4\pi)^{-1/2}\, t\, x^{-3/2}e^{-t^2/(4x)},  &\mbox{ if $x>0$,}\\
0\, ,& \mbox{ else, }
\end{array}
\right.
$$
the density function of the $1/2$-stable subordinator. By \cite[Example~2.13]{MR1739520},
$$
\int_0^\infty f_t(x)e^{-ux}dx=e^{-t u^{1/2}}\, , \quad u\ge 0\, .
$$
For $\phi\in C_c^\infty(\real)$ (smooth compactly supported real-valued functions on $\real$) we let 
$$
P_t\phi(x)=\int_0^\infty \phi(x+z)f_t(z)dz\ , \quad x\in \real\, .
$$ 
The generator of the semigroup $(P_t)$ is the Weyl fractional derivative,
\begin{align*}
\partial^{1/2}\phi(x) &=  
\int_0^\infty (4\pi)^{-1/2}\, z^{-3/2}\left(\phi(x+z)-\phi(x)\right)dz\\
&=\pi^{-1/2}\int_0^\infty z^{-1/2}\phi'(x+z)\,dz\,.
\end{align*}
Schr\"odinger perturbations of $\partial^\beta$ for $\beta\in (0,1)$ were considered in \cite{2011-KBTJSS}. We shall
discuss those for
the generator $L=\partial^{1/2}_s+\partial^{1/2}_x$ of the semigroup of two independent $1/2$-stable subordinators,
$$
T_t\vp(s,x)=\int_0^\infty \int_0^\infty \vp(s+u,x+z)f_t(u)f_t(z)dudz\, ,\quad s,x\in \real \, .
$$
Here and below, $\vp\in C_c^\infty(\real\times \real)$. For $s,x\in \real$ we have
\begin{align}
\vp(s,x)&=-\int_0^\infty \frac{d}{dt}T_t \vp(s,x)\,dt
=-\int_0^\infty T_t\  L\vp(s,x)\,dt\, . \label{eq:pwrc}
\end{align}
In view of (\ref{eq:pwrc}) we need to calculate the potential kernel $\int_0^\infty T_t dt$. Let
\begin{align*}
\kappa(s,x)&=\int_0^\infty f_t(s)f_t(x)dt\\
&=\left\{
\begin{array}{cc}
(4\pi)^{-1/2}(s+x)^{-3/2}\, ,& \mbox{ if $s,x>0$,}\\
0\, ,& \mbox{else,} 
\end{array}
\right.
\end{align*}
where the latter formula follows from direct integration. Define
$$
\kappa(s,x,u,z)=\kappa(u-s,z-x)\,, \quad s,x,u,z\in \real \, .
$$
By \eqref{eq:pwrc}, we obtain
\begin{align}\label{eq:fs2fd}
\int_\real\int_\real \kappa(s,x,u,z) (\partial^{1/2}_u+\partial^{1/2}_z)\vp(u,z)\,dudz=-\vp(s,x)\,,\quad s,x\in \real\,.
\end{align}
We observe a 3G-type inequality: if $s< u< t$ and $x< z< y$, then
\begin{equation}\label{eq:3Gsss}
\kappa(s,x,t,y)\le \kappa(s,x,u,z)\wedge \kappa(u,z,t,y)\le 2\sqrt{2}\,\kappa(s,x,t,y)\,,
\end{equation}
since $t-s+y-x\ge (u-s+z-x)\vee(t-u+y-z)\ge (t-s+y-x)/2$. 
Thus,
\begin{align}
\kappa(s,x,u,z)\kappa(u,z,t,y)&\le 2\sqrt{2}\, \kappa(s,x,t,y)\left[\kappa(s,x,u,z)\vee \kappa(u,z,t,y)\right]\nonumber\\
\end{align}
where  $s< u< t$, $x< z< y$, and this is sharp, since \eqref{eq:3Gsss} also yields 
\begin{align}
\kappa(s,x,u,z)\kappa(u,z,t,y)&\ge 
\kappa(s,x,t,y)\left[\kappa(s,x,u,z)+ \kappa(u,z,t,y)\right]/2\,.\nonumber
\end{align}
For $0<p<1/2$ and number $c>0$ we let
$$q_0(u,z) = \left\{\begin{array}{cc}c(u+z)^{-p}& \mbox{ if } u,z >0\,,\\
0& \mbox{ else.}
\end{array}\right.
$$
We consider $0\le q\le q_0$ and  the kernel
$$Kf(s,x):=\int_{\real^2} \kappa(s,x,u,z)q(u,z)f(u,z)dzdu\,.
$$
We will use Theorem~\ref{upper estimate} to compare $\kappa$ with $\tilde \kappa$
defined as 
\begin{equation}\label{eq:deftk}
\tilde \kappa= \sumn (\kappa q)^m \kappa,
\end{equation}
or, more precisely,
$$
\tilde{\kappa}(s,x,t,y)=\sumn K^m f(s,x)\, ,
$$
where we fix $t,y\in \real$ and denote (the control function),
$$
f(s,x):=\kappa(s,x,t,y)\,.
$$
We let $s<t$ and $x<y$, because otherwise $\tilde \kappa(s,x,t,y)=0=\kappa(s,x,t,y)$. 
Furthermore, we assume that $t+y>0$, else $\tilde \kappa(s,x,t,y)=\kappa(s,x,t,y)$. Let $h>0$
and $k\ge 1$ be such that $(k-1)h\le t+y<kh$ ($h$ is defined later on). For $j=0,\ldots,k$, we let $a_j=(k-j)h$. For $j=1,\ldots,k-1$, we define $A_j=\{(u,z): u+z\ge a_j\}$.
We also let $A_0=\emptyset$, and $A_k=\real^2$. The sets $A_j$ are increasing and absorbing. 
For $j=1,\ldots, k$, we define
$S_j = A_j \setminus A_{j-1}$, 
see Figure~\ref{pic:cw}. 
\begin{figure}
\caption{Notation for Example~\ref{ex:pkssss}.}\label{pic:cw}
\includegraphics[width=15.3cm, height=9cm]{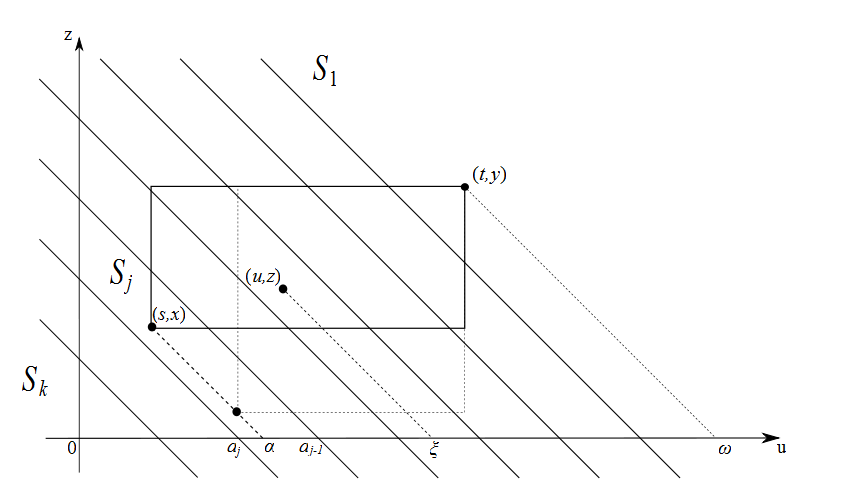}
\end{figure}
We will call $\{(u,z): u+z=\xi\}$, $\xi\in \real$,  the {\it level lines}. We define $K_j=K1_{S_j}$, as in Theorem~\ref{upper estimate}.
We have
\begin{align}\label{eq:Kjff}
& K_j f(s,x)/ f(s,x) \\
 & \le 2\sqrt{2}c \!\!\!\!\!\! \int\limits_{S_j\cap \{s \le u \le t,\; x \le z \le y\}} \!\!\!\!\!\!
[(u+z-s-x)^{-3/2} + (t+y-u-z)^{-3/2}] (u+z)^{-p} dzdu\,.\notag
\end{align}
We will estimate the right-hand side of \eqref{eq:Kjff}. 
Denote $\alpha=s+x$,  $\omega=t+y$ and $\xi=u+z$. 
Let $\alpha<a_{j-1}$ and $\omega>a_j$ (otherwise the integral is zero).
The integrand is constant along the level lines.
The integral is the largest when $\{(s,u)\in \real^2:\,s\le u \le t,\; x \le z \le y\}$ is a square, because the square's intersections with the level lines have the largest length, namely $\sqrt{2}[(\xi-\alpha)\wedge (\omega-\xi)]$, see Figure~\ref{pic:cw}.
Taking this into account or substituting $\xi=u+z, \eta=(u-z)/2$,  we bound the integral in \eqref{eq:Kjff} by
\begin{align*}
&\int_{\alpha\vee a_j}^{\omega\wedge a_{j-1}} (\xi-\alpha)(\xi-\alpha)^{-3/2}\xi^{-p}+(\omega-\xi)(\omega-\xi)^{-3/2}\xi^{-p}\; d\xi\\
&\le \int_{a_j}^{a_{j-1}} (\xi-a_j)^{-1/2-p}+ (a_{j-1}-\xi)^{-1/2}(\xi-a_{j})^{-p}\; d\xi\\
&=\big[B(1/2-p,1)+B(1/2,1-p)\big](a_{j-1}-a_j)^{1/2-p}\,,
\end{align*}
where $B$ is the Euler beta function. 

By Theorem~\ref{upper estimate}, if we let
$\eta=\beta=2\sqrt{2}c[B(1/2-p,1)+B(1/2,1-p)]h^{1/2-p}<1$ (the inequality determines $h$), then
\begin{align}
\tilde \kappa(s,x,t,y)\le \left(\frac1{1-\eta}\right)^j \kappa(s,x,t,y) \quad \mbox{for \; $(s,x)\in S_j$.}
\end{align}
In fact, $j<k+1-(s+x)/h\le (t+y-s-x)/h+2$. 
We see that $\kappa$ and $\tilde \kappa$ are locally comparable.
We also note that the first coordinate does not play a distinguished role here, in contrast to the examples in \cite{2011-KBTJSS} and below.
Finally, $\tilde\kappa$ may be considered a Schr\"odinger perturbation of $\kappa$, because
\begin{equation}\label{eq:fstsss}
\int_{\real\times\real}\tilde\kappa(s,x,u,z)
\left[\partial^{1/2}_z+\partial^{1/2}_u+q(u,z)\right] \vp(u,z)\,dzdu=-\vp(s,x),
\end{equation}
for $s,x\in \real$ and 
$\phi\in C^\infty_c(\real\times\real)$.
The identity \eqref{eq:fstsss} is proved by using \cite[(31)]{2011-KBTJSS}. 
Indeed, the absolute integrability of the integrals in \cite[(31)]{2011-KBTJSS} follows by considering the supports of the involved functions (we leave details to the reader).
We also wish to note that if $q_0(u,z)$ depends only on $u$ or $u\wedge z$,  then it is more convenient to consider absorbing sets
$\{(u,z)\in \real^2: u> s\}$ or $\{(u,z)\in \real^2: u>s, z>x\}$, correspondingly.}
\end{example}
In the remainder of the paper  we shall 
adopt the setting of Example~\ref{ex:p}. More precisely, we
consider the space-time $E=\real\times \cX$, with the product $\sigma$-algebra~$\E$, and an $\E\times\E$-measurable
function $p\ge 0$ on~$E\times E$ such that (\ref{zero}) holds, but we do {\it not} assume \eqref{chap-kol}. For
a measure $\mu$ on~$(E,\E)$ we define kernel
$K^\mu$ by (\ref{eq:Kmu}).
Motivated by the discussion 
in Introduction and Example~\ref{ex:p}, we let
\begin{equation}\label{eq:pmu}
p^\mu=\sum_{n=0}^\infty p^\mu_n,
\end{equation}
where $p_0^\mu=p$, and the positive functions $p_1^\mu, p_2^\mu\dots$  on~$E\times E$ are defined as follows,
\begin{align}\label{eq:idpn}
        p_n^\mu(s,x,t,y)&:=\int p^\mu_{n-1}(s,x,u,z) p(u,z,t,y)\,d\mu(u,z).
\end{align}
By induction, $p_n^\mu(s,x,t,y)=0$ for  $n\ge 0$, $(s,x),(t,y)\in E$, if $s\ge t$.
According to Introduction, we perturb $p$ by the {\it measure} $\mu$ (but see Example~\ref{ex:at2}, too).
We regard $(t,y)$ as fixed when iteratively transforming $f(s,x):=p(s,x,t,y)$  by $K^\mu$: 
$$
p_n^\mu(\cdot,\cdot,t,y)=(K^\mu)^n p(\cdot,\cdot,t,y).
$$
\begin{remark}
\rm
Similar perturbations may be studied for signed measures, say $\nu$.
We clearly have $|p^\nu|\leq p^\mu$, where $\mu=\nu_-+\nu_+$ is the variation measure of $\nu$.
We will not further concern ourselves with signed kernels or functions in this paper.
\end{remark}

In Example \ref{no-atoms}, \ref{ex:Dirac} and \ref{ex:at2} we will additionally suppose that $p$ is a transition density, that is, 
 the Chapman-Kolmogorov equations (\ref{chap-kol}) hold with respect to a $\sigma$-finite measure $m$ on $\cX$.

\begin{example}\label{no-atoms}
\rm 
 Let $\rho\ge 0$ be a Radon measure on $\real$ having no atoms,
and let $\mu:=\rho\otimes m$. Then, for all $(s,x),(t,y)\in E$ and $n\in\nat$,    $ p_n^\mu(s,x,t,y)=  \rho((s,t))^n p(s,x,t,y)/n!$ by induction,
and we obtain transition density
\begin{equation}\label{rho-est}
      p^\mu(s,x,t,y)=  e^{\rho((s,t))} p(s,x,t,y).
\end{equation}
\end{example}

\begin{example}\label{ex:Dirac} 
{\rm 
Let $\eta>0$, $u_0\in \real$,            
$\mu:=\eta  \ve_{u_0}\otimes m$. Here $\ve_{u_0}(f)=f(u_0)$ is the Dirac measure.
 Then $\mu$ is concentrated   on the ``hyperplane'' $\{u_0\}\times E$, and for  
$(s,x), (t,y)\in E$ we have by (\ref{chap-kol}),
\begin{equation*} 
p_1^\mu(s,x,t,y)=\int p(s,x,u,z)p(u,z,t,y)\, d\mu(u,z) =\begin{cases} 
         \eta       p(s,x,t,y), & \mbox{ if } s<u_0<t,\\[1mm]
                      0,& \mbox{ otherwise.}
                                                            \end{cases}
\end{equation*} 
For $n=2,3,\ldots$ and all $(s,x), (t,y)\in E$, we obtain $p_n^\mu(s,x,t,y)=0$, hence
\begin{equation}\label{eq:1-eta} 
p^\mu(s,x,t,y):=\sunn p_n^\mu(s,x,t,y)=
\begin{cases} 
              {(1+\eta)}\,p(s,x,t,y), & \mbox{ if } s<u_0<t,\\[1mm]
                      p(s,x,t,y),& \mbox{ otherwise.}
                                                            \end{cases}
\end{equation} 
}
\end{example}
There is, however, an alternative approach to perturbations by such measures.
\begin{example}\label{ex:at2}  
{\rm 
Let $u_0\in \real$ and    
$\mu:=  \ve_{u_0}\otimes m$. 
For $g\in \E^+$ we define
$$
Kg(s,x)=\left\{\begin{array}{lr}
0\,, & \mbox{ if } s>u_0\,,\\
g(s,x)\,, & \mbox{ if } s=u_0\,,\\
\int_{\real^d}p(s,x,u_0,z)g(u_0,z)\,dm(z)\,,& \mbox{ if } s<u_0\,.\\
\end{array} \right.
$$
Let $t>u_0$ and $y\in \real^d$ be fixed. We consider $f(s,x)=p(s,x,t,y)$, $(s,x)\in E$.
By Chapman-Kolmogorov equations, $Kf(s,x)= 1_{s\le u_0}\, p(s,x,t,y)$. 
By induction,
$K^n f(s,x)= 1_{s\leq u_0}\,p(s,x,t,y)$, for $n=1,2,\ldots$. 
If $0<\eta<1$, then
\begin{equation}\label{eq:Kq}
\tilde p(s,x,t,y):=\sum_{n=0}^\infty (\eta K)^n f(s,x)
=
\left\{\begin{array}{lr}
(1-\eta)^{-1}p(s,x,t,y)\,, & \mbox{ for } s\leq u_0\,,\\
p(s,x,t,y)\,, & \mbox{ otherwise},
\end{array}\right.
\end{equation}
whereas $\eta\geq 1$ leads to 
explosion of $\tilde p$.
We observe that $\tilde p$ satisfies Chapman-Kolmogorov equations, 
but not $p^\mu$ defined in Example~\ref{ex:Dirac}.

More generally, for an arbitrary Radon measure $\rho$ on $\real$, we let
$$Kg(s,x)=\rho(\{s\})g(s,x)+\int_{(s,\infty)}\int_\cX p(s,x,u,z)g(u,z) dm(z)\rho(du) .$$
We note that $K=K^{\rho\otimes m}$ (see (\ref{eq:Kmu})), if $\rho$ has no atoms. On one hand this motivates our interest in $K^\mu$ later in this section.
On the other hand, atoms are intrinsically related to the estimates obtained in \cite{MR2507445,2011-KBTJSS} and in Theorem~\ref{main-semi} below, because they produce inflation of mass very close to that given by the estimates. Indeed, let us fix numbers $u_1<u_2<\ldots<u_k$, and let
$\rho=\ve_{u_1}+\ve_{u_2}+\ldots+\ve_{u_k}$.
Assume that $u_k<t$. 
We have $Kf(s,x)=L(s) p(s,x,t,y)$, with $f$ as before and
$$L(s):=\#\{1\leq i\leq k:\ u_i\geq s\}.$$
By induction we verify that 
\begin{align}\nonumber
K^nf(s,x)&=\#\{(i_1, \ldots,i_n):\ s\le u_{i_1}\le \ldots\le   u_{i_n}\}p(s,x,t,y)\\
&={{L(s)+n-1}\choose{n}} p(s,x,t,y).\label{eq:ptk}
\end{align}
Notably, a similar combinatorics is triggered by gradient perturbation series in \cite[Lemma~5]{MR2876511}.
If $0<\eta<1$, then, by Taylor series expansion (\cite[p. 51]{MR2507445}), 
\begin{equation}\label{eq:iee}
\tilde p(s,x,t,y):=\sum_{n=0}^\infty (\eta K)^nf(s,x,t,y)=\left(\frac{1}{1-\eta}\right)^{L(s)}p(s,x,t,y).
\end{equation}
This should be compared with Theorem~\ref{main-semi} below.    
}
\end{example}

We now return to functions $p$ as specified before \eqref{eq:pmu}, i.e. we do not assume Chapman-Kolmogorov conditions, unless we explicitly say otherwise.

Let $I\subset \real$ be an interval and let $$\mu_I(A):=\mu(A\cap (I\times \cX)), \quad A\in \E.$$
For $n=0,1,2,\dots$, we denote (see above in this section)
$$
         p_n:=p_n^\mu  \und 
         p_n^I:=p_n^{{\mu_I}}.
$$
We also note that $p_n(s,x,t,y)=  p_n^{(s,t)}(s,x,t,y)$, which follows by induction.

The half-spaces
$(t,\infty)\times \cX$ and $[t,\infty)\times \cX$ 
are $K^\mu$-absorbing for $t\in \real$. The differences of such sets
are of the form $I\times \cX$, where $I$~is an  interval. For $I,J\subset \real$,
we write $I\prec J$, if $s<t$ for all $s\in I$ and $t\in J$.

\begin{theorem}\label{main-semi}
Let $-\infty< r<t<\infty$, $y\in \cX$, $\eta\in [0,1)$. Suppose that $[r, t)$ is the union of intervals $I_k\prec\dots\prec I_1$, such that
for all $j=1,\ldots,k$, and $x\in \cX$,
\begin{equation}\label{j-ass}
\int_{I_j\times \cX} p(s,x,u,z)p(u,z,t,y)\,d\mu(u,z) 
\le     \eta  \,  p(s,x, t, y),\quad r\leq s <t.
\end{equation} 
Then, for $j=1,\ldots,k$ and $x\in \cX$,               
\begin{equation}\label{eq:pt1-eta}
p^\mu(s,x,t,y):=\sunn p_n(s,x,t,y)
\le \left(\frac{1}{1-\eta}\right)^j p(s,x,t,y),\quad s \in I_j.
\end{equation}
\end{theorem}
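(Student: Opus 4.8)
The plan is to fix the endpoint $(t,y)$, set $f:=p(\cdot,\cdot,t,y)\in\E^+$, and obtain the estimate from Theorem~\ref{upper estimate}. Recall from the discussion preceding the theorem that $p_n(\cdot,\cdot,t,y)=(K^\mu)^np(\cdot,\cdot,t,y)=(K^\mu)^nf$ for every $n\ge0$, so that $p^\mu(\cdot,\cdot,t,y)=\sum_{m=0}^\infty(K^\mu)^mf$. Since $p(s,x,u,z)=0$ for $s\ge u$ while $f(u,z)=p(u,z,t,y)=0$ for $u\ge t$, the kernel $K:=K^\mu 1_B$, where $B:=(-\infty,t)\times\cX$, satisfies $1_Bf=f$, and, using $p_m(s,x,t,y)=0$ for $s\ge t$, a short induction gives $K^mf=(K^\mu)^mf$ for all $m\ge0$. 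Hence $\sum_{m=0}^\infty K^mf=p^\mu(\cdot,\cdot,t,y)$, and it suffices to estimate $\sum_{m=0}^\infty K^mf$.

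I would then build absorbing sets adapted to the partition: put $A_0:=\emptyset$ and $A_j:=(I_1\cup\dots\cup I_j)\times\cX$ for $j=1,\dots,k$. These increase, $A_1\subset\dots\subset A_k=[r,t)\times\cX$, and — because the $I_i$ are pairwise disjoint — the slices are \emph{exactly} $S_j=A_j\setminus A_{j-1}=I_j\times\cX$. The key point is that each $A_j$ is $K$-absorbing. Indeed, write $A_j=J_j\times\cX$ with $J_j:=I_1\cup\dots\cup I_j$; the condition $I_k\prec\dots\prec I_1$ forces $J_j$ to be an interval whose supremum equals $t$ and is not attained, so $(\inf J_j,t)\subseteq J_j$. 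For $(s,x)\in A_j$ the measure $K((s,x),\cdot)$ has $\mu$-density $1_{\{u<t\}}p(s,x,u,z)$, hence is concentrated on $\{(u,z):s<u<t\}$; since $s\ge\inf J_j$, every such $u$ lies in $(\inf J_j,t)\subseteq J_j$, so $K((s,x),A_j^c)=0$. (Note that $A_j$ is in general \emph{not} $K^\mu$-absorbing, as $K^\mu$ may transport mass from $[r,t)\times\cX$ into $[t,\infty)\times\cX$; multiplying by $1_B$ removes exactly this, and this is also why one cannot simply take the slices to be $I_j\times\cX$ when iterating $K^\mu$ itself.)

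It then remains to check the two hypotheses of Theorem~\ref{upper estimate}. For $1\le j\le k$ one has $K_j:=K1_{S_j}=K^\mu 1_{I_j\times\cX}$, because $S_j=I_j\times\cX\subseteq B$; thus
$$
K_jf(s,x)=\int_{I_j\times\cX}p(s,x,u,z)\,p(u,z,t,y)\,d\mu(u,z),
$$
which by hypothesis \eqref{j-ass} is at most $\eta\,p(s,x,t,y)=\eta f(s,x)$ for every $x\in\cX$ and every $s\in[r,t)$. As $S_j=I_j\times\cX\subseteq[r,t)\times\cX=A_k$, this yields both the local smallness \eqref{j-local} on $S_j$ and the global boundedness \eqref{j-global} on $A_k$, for every $j$, with constants $\eta$ and $\beta:=\eta$. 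Theorem~\ref{upper estimate} then gives, for $j=1,\dots,k$,
$$
\sum_{m=0}^\infty K^mf\le\frac1{1-\eta}\Bigl(1+\frac{\eta}{1-\eta}\Bigr)^{j-1}f=\Bigl(\frac1{1-\eta}\Bigr)^{j}f\on S_j=I_j\times\cX,
$$
where the last equality is the elementary identity $\frac1{1-\eta}\bigl(1+\frac{\eta}{1-\eta}\bigr)^{j-1}=(1-\eta)^{-j}$. Since $\sum_{m=0}^\infty K^mf=p^\mu(\cdot,\cdot,t,y)$, this is precisely \eqref{eq:pt1-eta}.

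I expect the one genuinely delicate step to be the absorbing-set bookkeeping in the second paragraph: one must make the slices equal $I_j\times\cX$ (so that \eqref{j-ass} applies verbatim, with no extra contribution from the hyperplanes $\{c\}\times\cX$ at the breakpoints $c$ between consecutive intervals — such a contribution would in general cost a factor $2$ in $\eta$) while keeping those slices absorbing for the operator being iterated; the truncation $K=K^\mu 1_B$ is exactly what reconciles these. The remaining ingredients — the identity $p_n(\cdot,\cdot,t,y)=(K^\mu)^nf$, the verification of \eqref{j-local}--\eqref{j-global}, and the closing arithmetic — are routine once Theorem~\ref{upper estimate} is available.
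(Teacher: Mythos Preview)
Your proof is correct and follows the paper's approach: apply Theorem~\ref{upper estimate} with $f=p(\cdot,\cdot,t,y)$, $A_j=(I_1\cup\dots\cup I_j)\times\cX$, $S_j=I_j\times\cX$, and $\beta=\eta$, then simplify $\frac1{1-\eta}\bigl(1+\frac{\eta}{1-\eta}\bigr)^{j-1}=(1-\eta)^{-j}$. You are in fact more careful than the paper on one point: as you observe, the sets $A_j$ are not $K^\mu$-absorbing when $\mu$ charges $[t,\infty)\times\cX$, so a literal invocation of Theorem~\ref{upper estimate} with $K=K^\mu$ is not quite legitimate; your truncation $K=K^\mu 1_{(-\infty,t)\times\cX}$, together with the verification that $K^m f=(K^\mu)^m f$, makes the application fully rigorous, whereas the paper's one-line proof leaves this implicit (relying on the fact that $f$ and its iterates vanish on $[t,\infty)\times\cX$).
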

\begin{proof}
We may apply Theorem~\ref{upper estimate} to $f(s,x):=p(s,x,t,y)$, $A_j=(I_j\cup\ldots\cup I_1)\times \cX$, $K_j:=K^{\mu_{I_j}}$, and $\beta:=\eta$, since (\ref{j-ass}) implies both (\ref{j-local}) and (\ref{j-global}). 
\end{proof}

\begin{corollary}\label{main-semi'}
Let $-\infty<r<t<\infty$, $y\in \cX$, $\beta\ge 0$ and $c\ge 1$ .
Suppose that 
\begin{equation}\label{eq:gb}
p_1(s,x,t,y)\le \beta\, p(s,x,t,y), \quad \mbox{ for all } \, s>r,\, x\in \cX,
\end{equation} 
and $[r,t)$ is a union of disjoint intervals $I_1,I_2,\dots, I_k$ satisfying,
\begin{equation}\label{pin}
 \sunn p_n^{I_j}(s,x,t,y)\le c\, p(s,x,t,y),  
                   \quad \mbox{ for }\   s\in I_j,\;  x\in \cX \quad (1\le j\le k).
\end{equation} 
Then there exists a constant $C$ such that $\sunn p_n(s,x,t,y)\le C\, p(s,x,t,y)$ for all $s\geq r$ and $x\in \cX$.
\end{corollary}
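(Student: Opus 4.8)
The plan is to recognize the data as an instance of Corollary~\ref{main'}, applied to the kernel $K:=K^\mu$ and the control function $f(s,x):=p(s,x,t,y)$, and to read off $C$ from the explicit bound \eqref{eq:Kmct} taken at $j=k$.

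First I would fix the geometry. After relabeling we may assume $I_k\prec\dots\prec I_1$, so that $I_1$ ends at $t$; writing $\sigma_j$ for the left endpoint of $I_j$ (hence $\sigma_k=r$), I set $A_0:=\emptyset$ and $A_j:=(I_1\cup\dots\cup I_j)\times\cX=[\sigma_j,t)\times\cX$ for $1\le j\le k$, so that $A_1\subset\dots\subset A_k=[r,t)\times\cX$ and the associated slices $S_j=A_j\setminus A_{j-1}$ are $I_j\times\cX$. Since $f$ and all the functions $p_n(\cdot,\cdot,t,y)$ vanish on $\{s\ge t\}$, I would replace $\mu$ by its restriction to $(-\infty,t)\times\cX$; this changes none of the functions $p_n(\cdot,\cdot,t,y)$ or $p_n^{I_j}(\cdot,\cdot,t,y)$ (use $p_n(s,x,t,y)=p_n^{(s,t)}(s,x,t,y)$ and $I_j\subset(-\infty,t)$), while now each $A_j$ is $K^\mu$-absorbing, because for $(s,x)\in A_j$ the measure $K^\mu((s,x),\cdot)$ is carried by $\{(u,z):s<u<t\}\subset A_j$. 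Setting $K_j:=K^\mu 1_{S_j}=K^{\mu_{I_j}}$, one has, straight from \eqref{eq:Kmu} and \eqref{eq:idpn}, the identities $(K^\mu)^nf=p_n(\cdot,\cdot,t,y)$ and $K_j^nf=p_n^{I_j}(\cdot,\cdot,t,y)$.

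Next I would verify the two hypotheses of Corollary~\ref{main'}. We may assume $c>1$ (otherwise replace $c$ by $2$, which only weakens \eqref{pin}) and fix $N\in\nat$ with $\eta:=c(1-1/c)^N<1$, which is possible since $0<1-1/c<1$. By the identities just recorded, \eqref{pin} reads $\sum_{m=0}^\infty K_j^mf\le cf$ on $S_j$, that is, condition \eqref{local}; and \eqref{eq:gb} reads $K^\mu f=p_1(\cdot,\cdot,t,y)\le\beta f$ on $A_k$. Corollary~\ref{main'} then yields, for $1\le j\le k$, $s\in I_j$ and $x\in\cX$,
\[
p^\mu(s,x,t,y)=\sum_{m=0}^\infty(K^\mu)^mf(s,x)\le\left(\sum_{\ell=0}^{N-1}\beta^\ell\right)\frac1{1-\eta}\left(1+\frac{\beta}{1-\eta}\right)^{j-1}p(s,x,t,y).
\]
The right-hand coefficient is nondecreasing in $j$ (since $1+\beta/(1-\eta)\ge1$), so taking $C$ to be its value at $j=k$ gives $p^\mu\le Cp$ on $[r,t)\times\cX$; for $s\ge t$ both sides vanish, and the claimed estimate holds for all $s\ge r$ and $x\in\cX$.

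I do not expect a genuine obstacle here --- the content is entirely in Corollary~\ref{main'} --- only a small point of hygiene at the left endpoint: \eqref{eq:gb} is assumed for $s>r$, whereas $A_k$ contains the hyperplane $\{r\}\times\cX$. For $s\in(r,t)$ this is harmless, since one may instead run the argument on $[r',t)$ for each $r'\in(r,t)$, refining the partition by truncating the interval that contains $r'$ (still at most $k$ pieces, each inheriting \eqref{pin} by monotonicity of $\nu\mapsto p_n^\nu$, and now \eqref{eq:gb} holds throughout $[r',t)$); the resulting constant $C$ is independent of $r'$. Granting \eqref{eq:gb} at $s=r$ as well --- which is natural and is precisely what the companion Theorem~\ref{main-semi} posits --- the value at $s=r$ then follows from the recursion $p^\mu(r,x,t,y)=p(r,x,t,y)+\int p(r,x,u,z)\,p^\mu(u,z,t,y)\,d\mu(u,z)$, whose integrand is carried by $\{u>r\}$, where $p^\mu\le Cp$ has already been established.
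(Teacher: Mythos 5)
The proposal is correct and follows essentially the same route as the paper: the paper's proof simply says ``proceed as in the proof of Theorem~\ref{main-semi}, using Corollary~\ref{main'}'', with $f(s,x)=p(s,x,t,y)$, $A_j=(I_j\cup\dots\cup I_1)\times\cX$, $K_j=K^{\mu_{I_j}}$, and $C=\bigl(\sum_{n=0}^{N-1}\beta^n\bigr)\bigl[1+\beta/(1-\eta)\bigr]^{k-1}/(1-\eta)$. Your write-up just spells out, with more care than the paper does, a few hygiene points that are indeed present --- restricting $\mu$ to $(-\infty,t)\times\cX$ so that the sets $A_j$ really are $K^\mu$-absorbing, bumping $c$ to $2$ if $c=1$ so that Corollary~\ref{main'} applies, and the mismatch between the hypothesis being stated for $s>r$ while the conclusion is claimed for $s\ge r$ (which, as you note, either needs \eqref{eq:gb} at $s=r$ or a limiting argument) --- so this is the same proof, just made airtight.
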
 
\begin{proof}
We proceed as in the proof of Theorem~\ref{main-semi}, using
Corollary \ref{main'}. We let $C=\left(\sum_{n=0}^{\Nc -1} \beta^n\right)
\left[1+\beta/(1-\etac)\right]^{k-1}/(1-\etac)$, where $\eta=c(1-1/c)^N<1$. 
\end{proof}
\begin{remark}{\rm  If the inequality 
in (\ref{pin}) holds on $[r,\infty)\times \cX$,  for $1\le j\le k$, then
$$
p_1(s,x,t,y)=\sum_{j=1}^k p_1^{I_j}(s,x,t,y)
\le kc\, p(s,x,t,y), \quad s\geq r,\ x\in \cX,
$$
and (\ref{eq:gb}) holds with $\beta=kc$.
}
\end{remark}

If  $p$ satisfies  (\ref{chap-kol}), then we can {\it localize} (\ref{j-ass}) as follows.

\begin{lemma}\label{p-local-1}
Suppose that $p$ satisfies   the Chapman-Kolmogorov
equations. Let $(t,y)\in E$, $\eta\ge 0$, and let an interval $I\subset (-\infty, t)$ satisfy, for all $(s,x)\in I\times \cX$, 
\begin{equation}\label{semi-diff-form}
\int p(s,x,u,z)p(u,z,t,y)\, d\mu_I(u,z) \le \eta\, p(s,x,t,y).
\end{equation} 
Then {\rm(\ref{semi-diff-form})} holds for {\rm all} $(s,x)\in E$.
\end{lemma}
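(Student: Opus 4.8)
The plan is to read \eqref{semi-diff-form} off for free whenever $s$ lies inside or after $I$, and, for $s$ before $I$, to recover it from the two given facts by sandwiching a time level $v\in I$ strictly between $s$ and the points $u\in I$ and applying \eqref{chap-kol} twice. So, fix $(s,x)\in E$. If $s\in I$, then \eqref{semi-diff-form} is the hypothesis; if $s>u$ for every $u\in I$, then $p(s,x,u,z)=0$ for all $(u,z)\in I\times\cX$ by \eqref{zero}, so the left side of \eqref{semi-diff-form} is $0$. Since $I$ is an interval, the only remaining possibility is $s<u$ for every $u\in I$, i.e. $s\le a:=\inf I$, and this is the case I would actually work on.

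In that case, pick $v_n\in I\cap(a,\infty)$ with $v_n\downarrow a$ (if $I\cap(a,\infty)=\emptyset$ then $I\subseteq\{a\}$, so $\mu_I=0$ once $\mu$ charges no time‑hyperplane, and there is nothing to prove), and set $I_n:=I\cap(v_n,\infty)$. As $\mu_I$ is then carried by $(a,t)\times\cX$, we have $1_{I_n}\uparrow 1$ $\mu_I$‑a.e., so by monotone convergence it suffices to bound $\int_{I_n\times\cX}p(s,x,u,z)p(u,z,t,y)\,d\mu_I(u,z)$ by $\eta\,p(s,x,t,y)$ for each fixed $n$. For that $n$ and every $u\in I_n$ one has $s<v_n<u<t$, so \eqref{chap-kol} gives $p(s,x,u,z)=\int_\cX p(s,x,v_n,w)p(v_n,w,u,z)\,dm(w)$. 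Substituting this, exchanging the order of integration by Tonelli (all integrands are nonnegative), enlarging the domain $I_n$ to $I$ inside the resulting inner integral, and applying the hypothesis \eqref{semi-diff-form} at the point $(v_n,w)\in I\times\cX$, that inner integral is $\le\eta\,p(v_n,w,t,y)$; a second use of \eqref{chap-kol}, now with $s<v_n<t$, collapses $\int_\cX p(s,x,v_n,w)p(v_n,w,t,y)\,dm(w)$ to $p(s,x,t,y)$. Hence the $n$‑th integral is $\le\eta\,p(s,x,t,y)$, and letting $n\to\infty$ finishes the proof.

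The one delicate point, and the main obstacle, is the left endpoint $a=\inf I$: if $a\in I$ and $\mu$ put positive mass on the hyperplane $\{a\}\times\cX$, the slice $\int_{\{a\}\times\cX}p(s,x,a,z)p(a,z,t,y)\,d\mu$ would fall outside the limiting procedure above — indeed, for $p$ the Gaussian kernel and $\mu=\delta_a\otimes m$ the left side of \eqref{semi-diff-form} equals $p(s,x,t,y)$, not $\eta\,p(s,x,t,y)$, for $s<a$. This is precisely where one uses the hypothesis that $\mu$ does not charge the time‑hyperplanes $\{u\}\times\cX$, $u\in\real$ (the condition already appearing in Remark~\ref{sje}); under it the offending slice is $\mu$‑null, $\mu_I$ is carried by $(a,t)\times\cX$, and the argument goes through verbatim (when $I$ is left‑open this is automatic). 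Finally, I would observe that this lemma lets one verify \eqref{j-ass} in Theorem~\ref{main-semi} by only checking the bound on each slice $S_j=I_j\times\cX$ itself: \eqref{semi-diff-form} with $I=I_j$ imposed on $I_j\times\cX$ then propagates to all of $[r,t)\times\cX$, which is exactly what \eqref{j-ass} demands.
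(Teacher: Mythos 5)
Your argument follows the paper's route in all essentials — slice $p(s,x,u,z)$ with \eqref{chap-kol} at a time level inside $I$, swap the order of integration, apply the hypothesis at that slicing level, collapse with a second use of \eqref{chap-kol}, and pass to the limit by monotone convergence — but you have also correctly spotted a real gap that the paper's own proof glosses over. The paper picks $a\in I$, sets $J:=[a,\infty)\cap I$, and writes $p(s,x,u,z)=\int p(s,x,a,w)p(a,w,u,z)\,dm(w)$ for $(u,z)\in J\times\cX$; at $u=a$ the right side vanishes by \eqref{zero} while the left side need not, so this equality fails on the slice $\{a\}\times\cX$ whenever $\mu$ charges it, and when $\inf I\in I$ the paper's instruction ``take $a=\inf I$'' runs straight into this. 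Your counterexample is valid: for $I=[a,b)\subset(-\infty,t)$, $\mu=\ve_a\otimes m$ and $p$ the Gaussian transition density, \eqref{semi-diff-form} holds on $I\times\cX$ with $\eta=0$ (there $s\ge a$, hence $p(s,x,a,z)=0$), yet for $s<a$ the left side of \eqref{semi-diff-form} equals $p(s,x,t,y)>0$ by \eqref{chap-kol}. The lemma as stated therefore tacitly requires that $\mu$ not charge $\{\inf I\}\times\cX$ when $\inf I\in I$ — subsumed by the non-charging hypothesis already invoked in Remark~\ref{sje} — and under that proviso your variant, slicing at interior levels $v_n\in I\cap(\inf I,\infty)$ with $v_n\downarrow\inf I$, is precisely the repair the paper's monotone convergence step needs.
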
 
\begin{proof}
If $s\in I$ or $s$ is to the right of $I$, then (\ref{semi-diff-form}) clearly holds, see~(\ref{zero}). If $s$ is to the left of $I$, $a\in I$, $J:=[a,\infty)\cap I$, and $x\in E$, then by (\ref{chap-kol})  and (\ref{semi-diff-form}),
\begin{eqnarray*} 
&&\int  p(s,x,u,z)p(u,z,t,y)\, d\mu_{J}(u,z)\\ 
&=&\int \int p(s,x,a,w)p(a, w,u,z)
               p(u, z,t,y)\,dm(w)\,d\mu_{J}(u, z)\\
&\le&\eta \int p(s,x,a,w) 
              p(a,w,t,y)\,dm(w)=\eta\, p(s,x,t,y).
\end{eqnarray*}
So (\ref{semi-diff-form}) holds,                                                                                            
if $\inf I\in I$ (take $a=\inf I$). If not, it follows by  monotone convergence, by letting $a\in I$  approach $\inf I$.
\end{proof} 

\begin{lemma}\label{p-local-2}
Suppose that $p$ satisfies  the Chapman-Kolmogorov
equations. Let $\eta\ge 0$ and an interval $I$ be such that,                         
for all $s,t\in I$ and $x,y\in \cX$,  
\begin{equation}\label{semi-diff-form-2}
\int p(s,x,u,z)p(u,z,t,y)\, d\mu_I(u,z) \le \eta\, p(s,x,t,y).
\end{equation} 
Then {\rm(\ref{semi-diff-form-2})} holds for {\rm all} $(s,x),(t,y)\in E$.
\end{lemma}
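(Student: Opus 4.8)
The plan is to reduce Lemma~\ref{p-local-2} to Lemma~\ref{p-local-1} by a two-sided localization argument: first extend the bound in the second variable (upper endpoint of the interval) to all of~$E$, then invoke Lemma~\ref{p-local-1} to extend in the first variable. Fix $s,t\in I$ and $x,y\in \cX$. The hypothesis \eqref{semi-diff-form-2} holds for such $s,t$; I want to show that for fixed $(s,x)\in I\times\cX$ the inequality
$$
\int p(s,x,u,z)p(u,z,\tau,\upsilon)\, d\mu_I(u,z) \le \eta\, p(s,x,\tau,\upsilon)
$$
holds for \emph{all} $(\tau,\upsilon)\in E$, not just $\tau\in I$. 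If $\tau$ lies to the left of $I$ or inside $I$ the claim is clear (using~\eqref{zero} and the hypothesis respectively); if $\tau$ is to the right of $I$, pick $b\in I$, set $J:=(-\infty,b]\cap I$, and use Chapman--Kolmogorov \eqref{chap-kol} at the intermediate time~$b$:
$$
\int p(s,x,u,z)p(u,z,\tau,\upsilon)\, d\mu_J(u,z)
=\int\!\!\int p(s,x,u,z)p(u,z,b,w)p(b,w,\tau,\upsilon)\,dm(w)\,d\mu_J(u,z),
$$
and then apply \eqref{semi-diff-form-2} with $t=b$ (noting $u<b$ on the support of $\mu_J$, so $p(u,z,b,w)$ is the relevant kernel) to bound the inner $d\mu_J(u,z)$-integral by $\eta\, p(s,x,b,w)$, followed by Chapman--Kolmogorov again to collapse $\int p(s,x,b,w)p(b,w,\tau,\upsilon)\,dm(w)=p(s,x,\tau,\upsilon)$. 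This handles $J=(-\infty,b]\cap I$; letting $b\in I$ increase to $\sup I$ and using monotone convergence gives the bound for $\mu_I$ itself (if $\sup I\in I$, just take $b=\sup I$ directly).

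At this stage I have shown: for every interval~$I$ as in the hypothesis, the bound
$$
\int p(s,x,u,z)p(u,z,\tau,\upsilon)\, d\mu_I(u,z) \le \eta\, p(s,x,\tau,\upsilon)
$$
holds for all $(s,x)\in I\times\cX$ and all $(\tau,\upsilon)\in E$. Now fix $(\tau,\upsilon)\in E$. If $I\subset(-\infty,\tau)$, this is precisely hypothesis \eqref{semi-diff-form} of Lemma~\ref{p-local-1} (with $(t,y)$ there being $(\tau,\upsilon)$ and the interval there being~$I$), so Lemma~\ref{p-local-1} upgrades it to all $(s,x)\in E$, which is what \eqref{semi-diff-form-2} asserts. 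The interval $I$ need not be contained in $(-\infty,\tau)$ in general, but $\mu_I$ only sees times $u$ with $p(u,z,\tau,\upsilon)\neq 0$, i.e. $u<\tau$; so replacing $I$ by $I\cap(-\infty,\tau)$ changes neither side of the inequality, and $I\cap(-\infty,\tau)$ is an interval of the required type (the hypothesis \eqref{semi-diff-form-2} for the smaller interval follows from that for~$I$ since $\mu_{I\cap(-\infty,\tau)}\le\mu_I$). Thus Lemma~\ref{p-local-1} applies and yields the conclusion for all $(s,x),(\tau,\upsilon)\in E$.

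The only delicate point is bookkeeping about endpoints of~$I$ — whether $\sup I$ and $\inf I$ belong to~$I$ — which is exactly the role of the monotone-convergence passage already used in the proof of Lemma~\ref{p-local-1}; I expect this to be the main (minor) obstacle, together with making sure that in the Chapman--Kolmogorov splitting the intermediate time $b$ is correctly placed so that the factor $p(u,z,b,w)$ appearing is covered by \eqref{semi-diff-form-2} applied with both time arguments in~$I$. No new idea beyond Lemma~\ref{p-local-1} is needed: the statement is simply the "two-sided" companion of that lemma, obtained by localizing first in the forward time variable and then in the backward one.
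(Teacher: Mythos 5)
Your approach is the same as the paper's: reduce to Lemma~\ref{p-local-1} after restricting the interval to $I\cap(-\infty,t)$ using \eqref{zero}. However, you carry out one step that the paper's two-line proof leaves implicit, and in my view this step is genuinely needed. The paper says ``fix $(t,y)$, replace $I$ by $I\cap(-\infty,t)$, apply Lemma~\ref{p-local-1},'' but the hypothesis of Lemma~\ref{p-local-1} requires \eqref{semi-diff-form} to hold for $(s,x)\in I'\times\cX$ with that \emph{fixed} $(t,y)$, whereas \eqref{semi-diff-form-2} only supplies the bound when $t\in I$. If $t$ lies to the right of $I$, one must first propagate the inequality in the forward time variable — exactly your first step, done by inserting an intermediate time $b\in I$ near $\sup I$ via Chapman--Kolmogorov, applying \eqref{semi-diff-form-2} at the pair $(s,b)$, collapsing with Chapman--Kolmogorov again, and passing to the limit $b\uparrow\sup I$. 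This is the mirror image of the argument in the proof of Lemma~\ref{p-local-1} (which extends to the left in $s$), so no new idea is needed, but your write-up makes the reduction honest where the paper's is elliptical. Your bookkeeping of endpoints (using monotone convergence when $\sup I\notin I$) matches the treatment of $\inf I$ in the paper's proof of Lemma~\ref{p-local-1}, and the same caveat applies in both places: when $\mu$ charges the hyperplane at the relevant endpoint of $I$, the Chapman--Kolmogorov substitution at that exact time needs a little extra care, an issue already inherent in the paper's own argument and not something your proof introduces.
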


\begin{proof} Let us fix $(t,y)\in E$.  By~(\ref{zero}) we may replace $I$ by $I\cap (-\infty,t)$. An application of Lemma \ref{p-local-1}
finishes the proof.
\end{proof} 

\begin{corollary}\label{main-semick}          
Suppose that $p$ satisfies the Chapman-Kolmogorov equations. 
Let $-\infty< r<t<\infty$, $y\in \cX$ and  $\eta\in [0,1)$. Let $[r,t)$ be
the union of intervals $I_k\prec\dots\prec I_1$.  Assume that for $j=1,\ldots,k$ and $I:=I_j$, {\rm (\ref{semi-diff-form-2})} holds for all $s\in I_j$ and $x\in \cX$. Then {\rm (\ref{eq:pt1-eta})} holds for $j=1,\ldots,k$ and $x\in \cX$.
\end{corollary}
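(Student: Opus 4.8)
The plan is to reduce Corollary~\ref{main-semick} to Theorem~\ref{main-semi} by using the localization Lemma~\ref{p-local-2} to upgrade the hypothesis. We are given that for each $j=1,\ldots,k$, the bound \eqref{semi-diff-form-2} with $I=I_j$ holds for all $s\in I_j$ and $x\in \cX$ (the indicated freedom is in $s$ and $x$; implicitly $(t,y)$ is the fixed pair from the statement, and $t$ is to the right of every $I_j$ so the integral over $\mu_{I_j}$ only sees $u$ in $I_j\subset(-\infty,t)$).

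First I would observe that, since $I_j\prec\dots\prec I_1\subset(-\infty,t)$, for the fixed endpoint $(t,y)$ the hypothesis \eqref{semi-diff-form-2} on $I_j$ is exactly hypothesis \eqref{semi-diff-form} of Lemma~\ref{p-local-1} (equivalently, the hypothesis of Lemma~\ref{p-local-2} restricted to this fixed $(t,y)$). Applying Lemma~\ref{p-local-1} (or Lemma~\ref{p-local-2}) with $I=I_j$ then yields that
\begin{equation*}
\int p(s,x,u,z)p(u,z,t,y)\,d\mu_{I_j}(u,z)\le \eta\, p(s,x,t,y)
\end{equation*}
for \emph{all} $(s,x)\in E$, not merely for $s\in I_j$. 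In particular this inequality holds for all $r\le s<t$ and all $x\in\cX$, which is precisely condition \eqref{j-ass} of Theorem~\ref{main-semi}.

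Next I would simply invoke Theorem~\ref{main-semi}: with the same $r,t,y,\eta$, the intervals $I_k\prec\dots\prec I_1$ whose union is $[r,t)$, and hypothesis \eqref{j-ass} now verified for every $j$, the theorem gives exactly the conclusion \eqref{eq:pt1-eta}, namely $p^\mu(s,x,t,y)\le (1-\eta)^{-j}p(s,x,t,y)$ for $s\in I_j$ and $x\in\cX$, for $j=1,\ldots,k$. That completes the proof.

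There is essentially no obstacle here — the only point requiring a moment's care is checking that the Chapman--Kolmogorov hypothesis of Corollary~\ref{main-semick} is precisely what licenses the use of Lemma~\ref{p-local-1}/\ref{p-local-2}, and that the measure $\mu_{I_j}$ appearing after localization matches the $K^{\mu_{I_j}}$ used implicitly in Theorem~\ref{main-semi}'s proof. One should also note that \eqref{zero} handles the cases $s\in I_j$ or $s$ to the right of $I_j$ trivially, so the content of the localization is genuinely about extending the estimate to $s$ lying to the left of $I_j$, which is exactly what Lemma~\ref{p-local-1} does via a Chapman--Kolmogorov splitting at a point $a\in I_j$ approaching $\inf I_j$.
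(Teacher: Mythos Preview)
Your proposal is correct and follows exactly the paper's own argument: the paper's proof is the single line ``The result follows from Theorem~\ref{main-semi} and Lemma~\ref{p-local-2},'' and you have spelled out precisely that reduction. Your observation that Lemma~\ref{p-local-1} already suffices for the fixed $(t,y)$ is also correct and matches how Lemma~\ref{p-local-2} is proved.
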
 

\begin{proof}
The result follows from Theorem~\ref{main-semi} and Lemma~\ref{p-local-2}.
\end{proof}

\begin{remark}\label{rem:atoms}
{\rm 
To prove comparability of $p$ and $p^\mu$ under (\ref{chap-kol}) in specific situations, it is enough to choose intervals $I_j$ such that $\mu(E\setminus (I_1\cup\ldots\cup I_k)\times \cX)=0$, and for all $s,t\in I$, $x,y\in \cX$, $j=1,\ldots,k$,
\begin{equation}\label{eq:spq}
\int_{I_j} p(s,x,u,z)p(u,z,t,y)\, d\mu(z) \le \eta\, p(s,x,t,y).
\end{equation} 
\mn
If (\ref{semi-diff-form-2}) fails, then $p^\mu$ may be much bigger than $p$, see Example~\ref{ex:at2}.
}
\end{remark}
Our last example is essentially from \cite{MR2457489}.
\begin{example}\label{ex:Ctd}
{\rm 
We 
consider the Cauchy transition density on $\reald$, i.e. we let
\begin{equation*}
        p(s,x,t,y)=\begin{cases}
c_d (t-s)\big[(t-s)^2+|y-x|^2\big]^{-(d+1)/2}
                    ,& \mbox{ if }
                      s<t,\\
                    0,&\mbox{ if } s\ge t.
                      \end{cases}
\end{equation*} 
We observe the following {power-type} asymptotics of $p$: 
\begin{equation}\label{ptxyEstimates}
      p(s,x,t,y) \approx
        \frac{t-s}{|y-x|^{d+1}} \land (t-s)^{-d}\,,\quad x,y\in  \reald\,,\; s<t\,,
\end{equation}
where $L\approx R$ means that $L/R$ is bounded away from zero and infinity.
In consequence, there  is a constant $c$ depending only on $d$, such that
\begin{eqnarray}\label{3P:ineq}
        p(s,x,u,z) \land p(u,z,t,y) \le c\, p(s,x,t,y)\,,\quad x,z,y \in  \reald\,,\;s,u,t\in \real\,,     
\end{eqnarray}         
see the 3P Theorem in \cite{MR2283957}.
For numbers $a,b\geq 0$ we have $ab = (a \vee b) (a \land b)$ and
$a\vee b \le a+b$.                                                                              
Therefore (\ref{3P:ineq}) yields
the following variant:                                                                           
\begin{equation}\label{5P:ineq}
p(s,x,u,z)p(u,z,t,y)\le c\, p(s,x,t,y) \big[p(s,x,u,z) + p(u,z,t,y)\big]\,,
\end{equation}
and we obtain 
$$
p_1(s,x,t,y)\leq c\, p(s,x,t,y)\int_\reald\int_s^t \left[p(s,x,u,z)+ p(u,z,t,y)\right]d\mu(u,z)\,.
$$

Assume that $\mu$ is of Kato class, to wit,
$$
k(h):=\sup_{x,y\in \reald,\; s<t\leq s+h} \int_\reald\int_s^t \left[p(s,x,u,z)+ p(u,z,t,y)\right]d\mu(u,z) \to 0 \quad \mbox{ as } h\to 0\,.
$$
Let $h>0$ and $\eta:=ck(h)<1$. If $s+(j-1)h<t\leq s+jh$, where $j$ is a natural number, then,  by Corollary~\ref{main-semick},  for all $x,y\in \reald$,
\begin{equation*}
p^\mu(s,x,t,y)
\le \left(\frac{1}{1-\eta}\right)^j p(s,x,t,y)
\le \left(\frac{1}{1-\eta}\right)^{1+(t-s)/h} p(s,x,t,y)\,.
\end{equation*}
This is a special case of \cite[Theorem~1]{MR2507445}. 
In particular, if $d>1$, then,  by (\ref{ptxyEstimates}), 
$$\int_s^{t} p(s,x,u,z)du\approx |z-x|^{1-d}\wedge
\big[(t-s)^2|z-x|^{-d-1}\big]\,,\quad x,y\in \reald\,,\; s<t\,,
$$
and if $|d\mu(u,z)|\leq
|z|^{-1+\varepsilon}dzdu$ for some $\varepsilon\in (0, 1]$, then $\mu$ is of Kato class.
}
\end{example}
We refer the reader to \cite{MR2457489} for a comparison of different Kato conditions. We also refer to \cite{MR2471145} for a discussion of discontinuous multiplicative functionals of Markov processes, which 
bring some analogies with Example~\ref{ex:at2}. We also wish to mention recent results \cite{2012arXiv1205.4571B, arXiv:1112.3401} for non-local Schr\"odinger-type perturbations (see \cite{MR2316878} and \cite{MR2446046}, too). 
Schr\"odinger perturbations of  the Gaussian transition density 
are studied in \cite{MR1978999,  MR2253015}, see also \cite{MR2395164}.
We refer to \cite{MR2207878,MR2160104,MR2457489,MR2283957,MR2643799} for further instances, applications and forms of the 3P (or 3G) inequality (\ref{3P:ineq}).
In a related paper \cite{2011-KBTJSS} we present a more specialized approach to Schr\"odinger perturbations by functions for
transition densities, transition probabilities and general integral kernels in continuous time.

\bibliographystyle{abbrv}
\def\polhk#1{\setbox0=\hbox{#1}{\ooalign{\hidewidth
  \lower1.5ex\hbox{`}\hidewidth\crcr\unhbox0}}}

\end{document}